\def\car{\text{char}}
\def\Mat{\text{M}}
\def\defterm{\textbf}
\newcommand{\tr}{\operatorname{tr}}
\newcommand{\Ker}{\operatorname{Ker}}
\renewcommand{\setminus}{\smallsetminus}
\def\F{\mathbb{F}}
\def\K{\mathbb{K}}
\def\N{\mathbb{N}}
\def\calA{\mathcal{A}}
\def\calR{\mathcal{R}}
\def\lcro{\mathopen{[\![}}
\def\rcro{\mathclose{]\!]}}
\theoremstyle{definition}
\newtheorem{Def}{Definition}
\newtheorem{Not}[Def]{Notation}
\theoremstyle{plain}
\newtheorem{theo}{Theorem}
\newtheorem{prop}[theo]{Proposition}
\newtheorem{cor}[theo]{Corollary}
\newtheorem{lemme}[theo]{Lemma}
\theoremstyle{remark}
\newtheorem{Rems}{Remarks}
\newtheorem{Rem}[Rems]{Remark}
\title{On sums of idempotent matrices over a field of positive characteristic}
\author{Cl\'ement de Seguins Pazzis
\footnote{Professor of Mathematics at Lyc\'ee Priv\'e Sainte-Genevi\`eve, 2, rue
de l'\'Ecole des Postes, 78029 Versailles Cedex, FRANCE.}
\footnote{e-mail address: dsp.prof@gmail.com}}
\date{\today}
\begin{document}

\maketitle

\begin{abstract}
We study which square matrices are sums of idempotents over a field
of positive characteristic; in particular, we prove that any such matrix, provided it is large enough,
is actually a sum of five idempotents, and even of four when the field is a prime one.
\end{abstract}

\vskip 2mm
\noindent
\emph{AMS Classification:} 15A24; 15A23

\vskip 2mm
\noindent
\emph{Keywords:} matrices, idempotents, decomposition, cyclic matrices, finite fields.

\section{Introduction}

In this article, $\K$ will denote a field of characteristic $\car(\K)=p \neq 0$.
The prime subfield of $\K$ is then isomorphic to $\F_p$, so
we can assume, without loss of generality, that it is precisely $\F_p$.
We choose an algebraic closure $\overline{\K}$ of $\K$. We will use the French convention for the set of integers:
$\N$ will denote the set of non-negative integers, and $\N^*$ the one of positive integers.

\vskip 2mm
\noindent
An idempotent matrix of $\Mat_n(\K)$ is a matrix $P$
verifying $P^2=P$, i.e. idempotent matrices represent projectors in finite dimensional
vector spaces. Of course, any matrix similar to an idempotent is itself an idempotent.

\vskip 5mm
In recent history, decomposition of matrices into sums of idempotents have been extensively studied over fields of characteristic $0$. In this paper, we wish to determine:
\begin{enumerate}[(i)]
\item Which matrices of $\Mat_n(\K)$ are sums of idempotents?
\item What is the lowest integer $s_n(\K)$ such that every matrix of $\Mat_n(\K)$
which is a sum of idempotents can actually be decomposed as a sum of $s_n(\K)$ idempotents?
\end{enumerate}
The first question will be easily answered in section 3 (the trace says it all \ldots), but the second is in general a very hard one.
We will nevertheless determine $s_n(\K)$ for small fields and fields of small characteristic,
give good lower and upper bounds for $s_n(\K)$ in the general case, and
actually calculate $s_n(\K)$ for large $n$.
In order to do so, we will need a few technical results on cyclic matrices, which
we have reviewed in section 4. We will start by reviewing classic results of Hartwig, Putcha
and the author on sums and differences of idempotents in a matrix algebra (see \cite{HP} and \cite{dSPidem2}).

\section{Additional notations}

Given a list $(A_1,\dots,A_p)$ of square matrices, we will denote by
$$D(A_1,\dots,A_p):=\begin{bmatrix}
A_1 & 0 & & 0 \\
0 & A_2 & & \vdots \\
\vdots & & \ddots & \\
0 & \dots & &  A_p
\end{bmatrix}
$$
the block-diagonal matrix with diagonal blocks $A_1$, \dots, $A_p$.

\vskip 2mm
\noindent Similarity of two matrices $A$ and $B$ of $\Mat_n(\K)$ will be written $A\sim B$.

\vskip 2mm
\noindent
We denote by $H_{n,p}$ the elementary matrix
$\begin{bmatrix}
0 & \cdots & 0 &  1 \\
\vdots & & \vdots &  \\
0 & \cdots & 0 & 0
\end{bmatrix} \in \Mat_{n,p}(\K)$ with only non-zero coefficient located on the first row and
$p$-th column. \\
For $k \in \N^*$, we set
$$F_k:=D(0,\dots,0,1)\in \Mat_k(\K).$$

\section{Sums and differences of two idempotents}

\begin{Def}
Let $\calA$ be a $\K$-algebra and
$(\alpha_1,\dots,\alpha_n)\in (\K^*)^n$.
An element $x \in \calA$ will be called an
\textbf{$(\alpha_1,\dots,\alpha_n)$-composite} when there are idempotents $p_1,\dots,p_n$
such that $x=\underset{k=1}{\overset{n}{\sum}}\alpha_k.p_k$. \\
\end{Def}

\begin{Not}
When $A$ is a matrix of $\Mat_n(\K)$, $\lambda \in \overline{\K}$ and $k \in \N^*$, we denote by
$$n_k(A,\lambda):=\dim \Ker (A-\lambda.I_n)^k-\dim \Ker (A-\lambda.I_n)^{k-1},$$
i.e. $n_k(A,\lambda)$ is the number of  size greater or equal to $k$
for the eigenvalue $\lambda$ in the Jordan reduction of $A$
(in particular, it is zero when $\lambda$ is not an eigenvalue of $A$).
We also denote by $j_k(A,\lambda)$ the number of blocks of size $k$ for the eigenvalue $\lambda$ in the Jordan reduction of $A$.
\end{Not}

\begin{Def}
Two sequences $(u_k)_{k \geq 1}$ and $(v_k)_{k \geq 1}$ are said to be \defterm{intertwined}
when:
$$\forall k \in \N^*, \; v_k \leq u_{k+1} \quad \text{and} \quad u_k \leq v_{k+1.}$$
\end{Def}

With that in mind, the problem of determining whether a particular matrix $A \in \Mat_n(\K)$
is a $(1,-1)$-composite or a $(1,1)$-composite is completely answered by the following theorems, proved
in \cite{HP} and \cite{dSPidem2}.

\begin{theo}\label{HPdiff}
Assume $\car(\K) \neq 2$ and let $A \in \Mat_n(\K)$.
Then $A$ is a $(1,-1)$-composite iff all the following conditions hold:
\begin{enumerate}[(i)]
\item The sequences $(n_k(A,1))_{k \geq 1}$ and $(n_k(A,-1))_{k \geq 1}$ are intertwined.
\item $\forall \lambda \in \overline{\K} \setminus \{0,1,-1\}, \; \forall k \in \N^*, \; j_k(A,1)=j_k(A,-1)$.
\end{enumerate}
In particular, every nilpotent matrix is a difference of idempotents.
\end{theo}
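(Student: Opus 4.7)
The plan hinges on a single algebraic trick: if one sets $B := P + Q - I$ whenever $A = P - Q$, then direct expansion (using $P^2 = P$ and $Q^2 = Q$) yields the two identities
\[
A^2 + B^2 = I \qquad \text{and} \qquad AB + BA = 0.
\]
From the anticommutation, $B f(A) = f(-A) B$ for every polynomial $f$, so $B$ sends the generalized $\lambda$-eigenspace $E_\lambda(A) := \Ker(A-\lambda I)^n$ into $E_{-\lambda}(A)$. This is the structural backbone that drives the entire necessity half.

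\textbf{Necessity.} For $\lambda \in \overline{\K} \setminus\{0,1,-1\}$, the operator $I - A^2$ acts on $E_\lambda(A)$ as $(1-\lambda^2)I$ plus a nilpotent piece, hence is invertible there. Since $B^2 = I - A^2$, this forces $B$ to restrict to an isomorphism $E_\lambda(A) \to E_{-\lambda}(A)$ which moreover intertwines the restrictions of $A$ and of $-A$. This gives condition (ii) in the form $j_k(A,\lambda) = j_k(A,-\lambda)$. For the pair $\lambda = \pm 1$, one works on $E_1(A) \oplus E_{-1}(A)$: there $B^2 = I - A^2 = -(A-I)(A+I)$, and since $A+I$ is invertible on $E_1$ (resp.\ $A-I$ is invertible on $E_{-1}$), the kernels of the iterates $B^2$ match those of $A - I$ on one side and $A+I$ on the other. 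Combining this with the intertwining $B(A-I)^k = (-1)^k(A+I)^k B$ and tracking how $B$ shifts the filtrations $(\Ker(A-I)^k)_k$ and $(\Ker(A+I)^k)_k$ by one step (because $B^2$ has a one-dimensional-per-Jordan-block kernel) yields the intertwining inequalities $n_k(A,1) \leq n_{k+1}(A,-1)$ and $n_k(A,-1) \leq n_{k+1}(A,1)$. The generalized $0$-eigenspace of $A$ imposes no constraint, which explains why condition (ii) excludes $\lambda = 0$.

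\textbf{Sufficiency.} By similarity invariance one reduces $A$ to a direct sum of primary components, and by (ii) the components for $\pm \lambda$ (for $\lambda \ne 0, \pm 1$) come in matched pairs $D(C, -C)$, while by (i) the components for $\pm 1$ decompose into ``paired'' indecomposable pieces whose sizes differ by at most one (this is exactly what the intertwining condition encodes). It then suffices to exhibit an explicit $(1,-1)$-decomposition for each of three families of elementary blocks: (a) a single nilpotent Jordan block $J_k(0)$; (b) a block of the form $D(\lambda I_k + N, -\lambda I_k - N)$ with $N$ nilpotent and $\lambda \ne 0, \pm 1$, which is handled by splitting along the natural $2k$-dimensional eigenbasis; and (c) the paired $\pm 1$ pieces dictated by the intertwining condition, i.e.\ blocks of the form $D(I_k + N_1, -I_\ell - N_2)$ with $|k - \ell| \leq 1$. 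In each case one writes down $P$ and $Q$ by hand, using that $\car(\K)\neq 2$ so that $\tfrac{1}{2}$ is available to build projectors from involutions.

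\textbf{Main obstacle.} The necessity part for the eigenvalues $\pm 1$ and the sufficiency part for family (c) are both where the intertwining condition becomes essential rather than merely convenient, and getting the explicit pairings and explicit idempotents right — particularly in the ``off-by-one'' case where a block of size $k$ at eigenvalue $1$ is matched with a block of size $k \pm 1$ at eigenvalue $-1$ — is the delicate combinatorial and computational core of the proof. The final assertion that nilpotents are $(1,-1)$-composites is then immediate, since for $A$ nilpotent conditions (i) and (ii) are trivially satisfied.
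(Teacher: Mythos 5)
First, a point of comparison: the paper does not prove Theorem \ref{HPdiff} at all — it is quoted as a known result of Hartwig and Putcha (reference \cite{HP}, completed in \cite{dSPidem2}), so there is no internal proof to measure you against. Your plan is in fact the standard Hartwig--Putcha strategy: from $A=P-Q$ set $B:=P+Q-I$, verify $A^2+B^2=I$ and $AB+BA=0$ (both identities are correct), and conversely recover $P=\tfrac{1}{2}(I+A+B)$, $Q=\tfrac{1}{2}(I-A+B)$ from any such $B$, which is where $\car(\K)\neq 2$ enters. Your necessity half is essentially sound: $B$ maps $E_\lambda(A)=\Ker(A-\lambda I)^n$ into $E_{-\lambda}(A)$, the invertibility of $B^2=I-A^2$ on $E_\lambda$ for $\lambda^2\neq 1$ gives the matching of Jordan structures at $\pm\lambda$ (note that condition (ii) as printed in the paper should read $j_k(A,\lambda)=j_k(A,-\lambda)$, which is what you actually prove), and the analysis of the odd nilpotent operator $B$ on $E_1\oplus E_{-1}$ is the right mechanism for the intertwining inequalities.

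The genuine gap is in the sufficiency half, which is declared rather than carried out. ``In each case one writes down $P$ and $Q$ by hand'' is precisely the content of the theorem for the hard blocks; the clean way to execute it is to construct, block by block, an operator $B$ with $B^2=I-A^2$ and $AB=-BA$ (e.g.\ for $D(C,-C)$ take $B=\left[\begin{smallmatrix}0 & I-C^2\\ I & 0\end{smallmatrix}\right]$, and for a nilpotent block take $B=E\,(I-A^2)^{1/2}$ with $E=D(1,-1,1,\dots)$), but you give no such construction for the critical off-by-one case at $\pm 1$. Moreover your combinatorial translation of condition (i) is not quite right as stated: the intertwining of the Weyr sequences does not force every block at $1$ to be paired with a block at $-1$ of size within one — it also permits unpaired blocks of size $1$ (the matrix $A=(1)\in\Mat_1(\K)$ is $1-0$, a difference of idempotents, with nothing at eigenvalue $-1$), so your family (c) must allow $\ell=0$ and the equivalence between (i) and your pairing needs an actual argument. (Relatedly, the paper's printed definition of ``intertwined'' appears to have the inequalities reversed — as literally stated it would exclude $A=(1)$ — so part of completing the proof is pinning down the correct form of condition (i).) Your final remark that nilpotent matrices satisfy (i) and (ii) vacuously is correct.
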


\begin{theo}\label{HPsum}
Assume $\car(\K) \neq 2$, and let $A \in \Mat_n(\K)$.
Then $A$ is a $(1,1)$-composite iff all the following conditions hold:
\begin{enumerate}[(i)]
\item The sequences $(n_k(A,0))_{k \geq 1}$ and $(n_k(A,2))_{k \geq 1}$
are intertwined.
\item $\forall \lambda \in \overline{\K} \setminus \{0,1,2\}, \; \forall k \in \N^*, \;
j_k(A,\lambda)=j_k(A,2-\lambda)$.
\end{enumerate}
\end{theo}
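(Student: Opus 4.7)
The natural plan is to reduce Theorem~\ref{HPsum} to Theorem~\ref{HPdiff} by a translation trick. The key observation is that for any idempotent $Q \in \Mat_n(\K)$, the matrix $I_n-Q$ is again an idempotent. Consequently, writing $A = P+Q$ with $P,Q$ idempotents is equivalent to writing $A-I_n = P-(I_n-Q)$, which exhibits $A-I_n$ as a difference of two idempotents. Since $Q \mapsto I_n-Q$ is an involution on the set of idempotents, this gives the equivalence
$$A \text{ is a } (1,1)\text{-composite} \iff A-I_n \text{ is a } (1,-1)\text{-composite}.$$

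Having made this reduction, I would apply Theorem~\ref{HPdiff} to the matrix $A-I_n$. For this I need to relate the Jordan invariants of $A-I_n$ to those of $A$. This is immediate: the Jordan blocks of $A-I_n$ at an eigenvalue $\mu \in \overline{\K}$ are precisely those of $A$ at $\mu+1$, so
$$n_k(A-I_n,\mu)=n_k(A,\mu+1) \quad \text{and} \quad j_k(A-I_n,\mu)=j_k(A,\mu+1)$$
for every $k \in \N^*$ and every $\mu \in \overline{\K}$.

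It then remains to translate the two conditions of Theorem~\ref{HPdiff} applied to $A-I_n$ into the desired conditions on $A$. Condition~(i) of Theorem~\ref{HPdiff} requires $(n_k(A-I_n,1))_{k\geq 1}$ and $(n_k(A-I_n,-1))_{k\geq 1}$ to be intertwined; through the shift $\mu \mapsto \mu+1$ this becomes exactly the assertion that $(n_k(A,2))_{k\geq 1}$ and $(n_k(A,0))_{k\geq 1}$ are intertwined. For condition~(ii), the set $\overline{\K}\setminus\{0,1,-1\}$ of forbidden values for $\mu$ corresponds under $\lambda = \mu+1$ to $\overline{\K}\setminus\{0,1,2\}$, and the symmetry $\mu \leftrightarrow -\mu$ becomes $\lambda \leftrightarrow 2-\lambda$; so the equality $j_k(A-I_n,\mu)=j_k(A-I_n,-\mu)$ rewrites as $j_k(A,\lambda)=j_k(A,2-\lambda)$, exactly as required.

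There is no real obstacle here beyond carefully tracking the shift of eigenvalues; the use of $\car(\K)\neq 2$ is inherited from Theorem~\ref{HPdiff} (and indeed is needed only there, since our reduction $A \leftrightarrow A-I_n$ works in any characteristic). The argument is essentially a formal translation and carries the equivalence in both directions, so both implications of the theorem are obtained simultaneously.
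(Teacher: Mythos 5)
Your reduction is correct and complete as a derivation from Theorem~\ref{HPdiff}: the bijection $Q \mapsto I_n - Q$ on idempotents shows that $A$ is a $(1,1)$-composite iff $A - I_n$ is a $(1,-1)$-composite, and the eigenvalue shift $\mu \mapsto \mu + 1$ translates the invariants exactly as you describe. Note, however, that the paper itself offers no proof of this statement: both Theorem~\ref{HPdiff} and Theorem~\ref{HPsum} are quoted from \cite{HP} and \cite{dSPidem2}, so there is no in-paper argument to compare against; your translation trick is precisely the standard way the sum criterion is deduced from the difference criterion in those references. One small point worth making explicit: condition~(ii) of Theorem~\ref{HPdiff} as printed reads $j_k(A,1)=j_k(A,-1)$, which is evidently a typo for $j_k(A,\lambda)=j_k(A,-\lambda)$ (otherwise the quantifier over $\lambda$ is vacuous); you silently use the corrected version, which is the right thing to do, but you should flag that you are doing so.
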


\begin{theo}\label{HPsumcar2}
Assume $\car(\K)=2$ and let $A \in \Mat_n(\K)$.
Then $A$ is a $(1,-1)$-composite iff
for every $\lambda \in \overline{\K} \setminus \{0,1\}$,
all blocks in the Jordan reduction of $A$ with respect to $\lambda$ have an even size. \\
In particular, every triangularizable matrix with eigenvalues in $\{0,1\}$ is a sum (and a difference) of two idempotents.
\end{theo}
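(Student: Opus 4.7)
My plan is to prove both implications. Necessity is the substantive direction and rests on a structural reduction followed by an explicit matrix conjugation.

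Suppose $A=P+Q$ with $P,Q$ idempotent. In characteristic $2$ the identities $PAQ=P^{2}Q+PQ^{2}=2PQ=0$ and $QAP=0$ hold. With respect to $V=\im P\oplus\Ker P$ write $P=\begin{pmatrix}I & 0 \\ 0 & 0\end{pmatrix}$ and $Q=\begin{pmatrix}\alpha & \beta \\ \gamma & \delta\end{pmatrix}$; expanding $Q^{2}=Q$ gives the four identities $\alpha^{2}+\beta\gamma=\alpha$, $\alpha\beta+\beta\delta=\beta$, $\gamma\alpha+\delta\gamma=\gamma$, $\gamma\beta+\delta^{2}=\delta$, from which one directly checks $A^{2}=\begin{pmatrix}I+\alpha & 0 \\ 0 & \delta\end{pmatrix}$ and that $A^{2}$ commutes with both $P$ and $Q$. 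Since Frobenius is injective on $\overline{\K}$, the generalized eigenspaces of $A$ and $A^{2}$ coincide via $\lambda\leftrightarrow\lambda^{2}$, so $P,Q$ preserve each $V_{\lambda}^{A}$; this reduces the problem to $V=V_{\lambda}^{A}$ for a single $\lambda\in\overline{\K}\setminus\{0,1\}$. On such a $V$ both blocks of $A^{2}$ have only eigenvalue $\lambda^{2}$, so $\alpha$ has only eigenvalue $(\lambda+1)^{2}\neq 0,1$, and all four of $\alpha,I+\alpha,\delta,I+\delta$ are invertible. The first and fourth relations above rewrite as $\beta\gamma=\alpha(I+\alpha)$ and $\gamma\beta=\delta(I+\delta)$, both invertible, forcing $\beta$ and $\gamma$ to be isomorphisms and $\dim\im P=\dim\Ker P$. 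Using $\beta$ to identify $\Ker P$ with $\im P$ (so $\beta=I$) collapses the remaining relations to $\alpha=I+\delta$ and $\gamma=\delta+\delta^{2}$, yielding
$$A=\begin{pmatrix}\delta & I \\ \delta(I+\delta) & \delta\end{pmatrix}.$$
The decisive step is that $T=\begin{pmatrix}I & 0 \\ \delta+\lambda I & I\end{pmatrix}$ is involutive in characteristic $2$, and a direct computation gives $T(A-\lambda I)T=\begin{pmatrix}0 & I \\ N & 0\end{pmatrix}$ with $N:=\delta-\lambda^{2}I$ nilpotent. A cyclic-vector argument shows the latter has a single Jordan block of size $2m$ whenever $N$ is a single $J_{m}(0)$, with the general case following by block-decomposing $N$. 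Thus every Jordan block of $A$ at $\lambda$ has even size.

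For sufficiency, I would decompose $A$ into its primary components and handle each independently. Blocks at $0$ split via the explicit formula $Pe_{2i-1}=0$, $Pe_{2i}=e_{2i-1}+e_{2i}$ and $Q:=J_{k}(0)-P$, both immediately idempotent in characteristic $2$; blocks at $1$ follow since $I+P$ is idempotent when $P$ is. For a primary component at an irreducible $p\in\K[x]$ distinct from $x$ and $x-1$ with all Jordan blocks (over $\overline{\K}$) of even size $2m$, I would pick $\delta$ over $\K$ with the appropriate rational-canonical structure at the minimal polynomial of a square root of a root of $p$, set $P=\begin{pmatrix}I & 0 \\ 0 & 0\end{pmatrix}$ and $Q=\begin{pmatrix}I+\delta & I \\ \delta(I+\delta) & \delta\end{pmatrix}$, and verify $Q^{2}=Q$ by direct computation; running the forward analysis in reverse shows that $P+Q$ realizes the desired primary component.

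The main obstacle is the conjugation $T(A-\lambda I)T=\begin{pmatrix}0 & I \\ N & 0\end{pmatrix}$ together with the Jordan analysis of this matrix. Skipping this trick, the decomposition $A^{2}\sim\delta\oplus\delta$ yields only the weaker constraint that the number of odd-sized Jordan blocks of $A$ at $\lambda$ is even, not that there are none.
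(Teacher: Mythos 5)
This theorem is one the paper imports from \cite{HP} and \cite{dSPidem2} without proof, so there is no internal argument to compare yours against; I can only assess your proposal on its own terms. Your necessity direction is complete and correct: the identities $PAQ=QAP=0$, the computation $A^2=\begin{pmatrix}I+\alpha & 0\\ 0&\delta\end{pmatrix}$, the reduction to a single generalized eigenspace via injectivity of Frobenius, the invertibility of $\beta$ and $\gamma$ when $\lambda\notin\{0,1\}$, and the involutive conjugation sending $A-\lambda I$ to $\begin{pmatrix}0&I\\ N&0\end{pmatrix}$ with $N$ nilpotent all check out (you should just note explicitly that $\im P$ and $\Ker P$ are both nonzero on $V_\lambda^A$ when $\lambda\notin\{0,1\}$, which is immediate since otherwise $A|_{V_\lambda}$ or $A|_{V_\lambda}-I$ would be idempotent). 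This is essentially the classical Hartwig--Putcha block analysis relative to one of the two idempotents, and the doubling of block sizes via $M^2=N\oplus N$ with $M^{2m-1}\neq 0$ is exactly the right mechanism.

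The sufficiency half, however, is only a sketch and contains a directional slip. Since $A^2\sim\delta\oplus\delta$ and $A-\lambda I\sim\begin{pmatrix}0&I\\ \delta-\lambda^2 I&0\end{pmatrix}$, the matrix $\delta$ must be chosen with eigenvalues equal to the \emph{squares} of the roots of $p$ (one Jordan block of size $m$ at $\lambda^2$ producing one block of size $2m$ at $\lambda$), not square roots of roots of $p$ as you wrote; as stated your $\delta$ would yield eigenvalues that are fourth roots of the roots of $p$. The fix is harmless but requires a word on realizability over $\K$: the polynomial $\underset{i}{\prod}(X-\lambda_i^2)$ has coefficients $a_j^2\in\K$ and is irreducible when $p$ is separable, while for an inseparable irreducible $p=h(X^2)$ every Jordan block of $C(p^m)$ over $\overline{\K}$ already has even size $2m$ and one takes $\delta\sim C(h^m)$. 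With those corrections the construction $Q=\begin{pmatrix}I+\delta & I\\ \delta(I+\delta)&\delta\end{pmatrix}$, which is indeed idempotent for arbitrary $\delta$, does realize the required primary components, and your treatment of the eigenvalues $0$ and $1$ is fine.
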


\section{When is a matrix of $\Mat_n(\K)$ a sum of idempotents?}

\begin{theo} ${}$ \\
A matrix $A \in \Mat_n(\K)$ is a sum of idempotents iff $\tr A \in \F_p$. \\
In particular, every matrix of $\Mat_n(\F_p)$ is a sum of idempotents.
\end{theo}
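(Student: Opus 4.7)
The only-if direction is immediate: each idempotent $P \in \Mat_n(\K)$ is similar to $\mathrm{diag}(I_r,0)$ with $r = \rg P$, so $\tr P = r \cdot 1_\K \in \F_p$, and by additivity the trace of any sum of idempotents lies in $\F_p$.

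For the converse, I will proceed in three stages. First, I show that every nilpotent matrix is a sum of idempotents. In characteristic~$2$, Theorem~\ref{HPsumcar2} gives this as a sum of two idempotents. In odd characteristic~$p$, Theorem~\ref{HPdiff} writes any nilpotent as $P-Q$ with $P,Q$ idempotent, and the identity $-Q = (p-1)Q$, valid over any $\F_p$-algebra, turns this into a sum of $p$ idempotents.

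Second, I reduce to the case $\tr A = 0$: writing $\tr A = s \cdot 1_\K$ with $s \in \{0,\dots,p-1\}$ and fixing a rank-$1$ idempotent $E_0 \in \Mat_n(\K)$, the matrix $A - sE_0$ has trace zero, and if it is a sum of idempotents then so is $A = (A-sE_0) + sE_0$. Third, to handle a trace-zero matrix $B$, I invoke the classical similarity result (due in essence to Shoda): any $B$ with $\tr B = 0$ that is not a non-zero scalar multiple of $I_n$ is similar to a matrix with zero diagonal. Such a zero-diagonal matrix is the sum of its strictly upper and strictly lower triangular parts, both nilpotent, hence sums of idempotents by the first stage.

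The main obstacle is the exceptional case $B = \lambda I_n$ with $\lambda \neq 0$ and $n\lambda = 0$, which forces $p \mid n$, so $n \geq 2$. The trick will be to perturb: let $J$ be any non-zero nilpotent matrix (say, the one with a single $1$ in position $(1,2)$). Then $\lambda I_n + J$ has trace zero and is non-scalar, hence a sum of idempotents by the previous step, while $-J$ is nilpotent, hence a sum of idempotents by the first stage. Therefore $\lambda I_n = (\lambda I_n + J) + (-J)$ is a sum of idempotents, which completes the proof.
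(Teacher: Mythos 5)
Your proposal is correct and follows essentially the same route as the paper: the trace obstruction for the only-if part, nilpotents as sums of idempotents via a difference of idempotents and the identity $-Q=(p-1)Q$, reduction to trace zero by subtracting a multiple of a rank-one idempotent, Fillmore's zero-diagonal similarity for the non-scalar case, and a nilpotent perturbation for the scalar case. The only cosmetic difference is that you split the nilpotent step according to the characteristic (citing Theorems \ref{HPdiff} and \ref{HPsumcar2}) where the paper cites a single result of Hartwig--Putcha valid in all characteristics.
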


\begin{proof}
The ``only if" part is clear because an idempotent of rank $r$ in $\Mat_n(\K)$
has trace $r.1_\K \in \F_p$. \\
Conversely, let us first remark that any nilpotent matrix $N$ is
a sum of idempotents: indeed, by Proposition 1 of \cite{HP}, there are idempotents $Q_1$ and $Q_2$
such that $Q_1-Q_2=N$, so $N=Q_1+(p-1).Q_2$. \\
Assume $\tr A \in \F_p$, and choose $k \in \N$ such that $\tr A=k.1_\K$. \\
Let us choose an idempotent $Q \in \Mat_n(\K)$ of rank $1$, and
set $B:=A-k.Q$, so $\tr B=0$. It suffices to prove that $B$ is itself
a sum of idempotents. Since this is trivial when $B=0$, we now assume $B \neq 0$.
\begin{itemize}
\item \emph{The case $B$ is not scalar.} Then (cf. \cite{Fillmore}) $B$ is similar to a matrix
$C$ with diagonal coefficients all equal to zero;
such a $C$ can thus be written as the sum of a strictly upper triangular matrix
and a strictly lower triangular matrix, each of which is nilpotent.
Therefore, $B$ is a sum of idempotents.
\item \emph{The case $B$ is scalar.} Since $B \neq 0$, we must have $n \geq 2$,
so we can choose a non-zero nilpotent $N \in \Mat_n(\K)$. Hence
$B-N$ is not scalar and satisfies the conditions of the first case, so it is
a sum of idempotents.
Therefore, $B=(B-N)+N$ is a sum of idempotents.
\end{itemize}
In all cases, $B$ is a sum of idempotents, which finishes our proof.
\end{proof}

A closer inspection at the previous proof shows that any matrix of
$\Mat_n(\K)$ with trace in $\F_p$ is a sum of at most $4\,p$ idempotents.
In the rest of our paper, we will try to find a much tighter upper bound.

\section{A review of cyclic matrices}

The characteristic polynomial of a matrix $M$ will be denoted by $\chi_M$.

\vskip 2mm
\noindent
Let $P=X^n-\underset{k=0}{\overset{n-1}{\sum}}a_kX^k \in \K[X]$ be a monic polynomial with degree $n$.
Its \defterm{companion matrix} is
$$C(P):=\begin{bmatrix}
0 &   & & 0 & a_0 \\
1 & 0 & &   & a_1 \\
0 & \ddots & \ddots & & \vdots \\
\vdots & & & 0 & a_{n-2} \\
0 & & &  1 & a_{n-1}
\end{bmatrix}.$$
Its characteristic polynomial is precisely $P$, and so is its minimal polynomial.
We will set $\tr P:=\tr C(P)=a_{n-1}$ and $\deg P:=n$ (the degree of $P$).
We will use repeatedly the following basic fact
(cf. \cite{Gantmacher}):
when $P$ and $Q$ denote two mutually prime monic polynomials, one has
$$C(P\,Q) \sim \begin{bmatrix}
C(P) & 0 \\
0 & C(Q)
\end{bmatrix}.$$
\noindent
Let $A \in \Mat_n(\K)$. We say that $A$ is \defterm{cyclic} when
$A \sim C(P)$ for some polynomial $P$ (and then $P=\chi_A$).
A \defterm{good cyclic} matrix is a matrix of the form
$$A=\begin{bmatrix}
a_{1,1} & a_{1,2}  & &  & a_{1,n} \\
1 & a_{2,2} & &   &  \\
0 & \ddots & \ddots & & \vdots \\
\vdots & & & a_{n-1,n-1} & a_{n-1,n} \\
0 & & &  1 & a_{n,n}
\end{bmatrix}$$
with no condition on the $a_{i,j}$'s for $j \geq i$.

\vskip 2mm
\noindent This last lemma has been proven in \cite{dSPLC3} and
is the key to some of the results featured here:

\begin{lemme}[Choice of polynomial lemma]\label{cyclicfit}
Let $A \in \Mat_n(\K)$ and $B \in \Mat_p(\K)$ denote two good cyclic matrices, and
$P$ denote a monic polynomial of degree $n+p$ such that $\tr P=\tr A+\tr B$. \\
Then there exists a matrix $D \in \Mat_{n,p}(\K)$ such that
$$\begin{bmatrix}
A & D \\
H_{p,n} & B
\end{bmatrix} \sim C(P).$$
\end{lemme}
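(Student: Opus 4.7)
The plan is as follows. First I would observe that the block matrix $M:=\begin{bmatrix} A & D \\ H_{p,n} & B \end{bmatrix}$ is itself a good cyclic matrix of size $n+p$, for any choice of $D$: the subdiagonal $1$'s of $A$, the $1$ at position $(n+1,n)$ coming from $H_{p,n}$, and the subdiagonal $1$'s of $B$ stitch together into a full subdiagonal of $1$'s in $M$, with zeros below. Since a good cyclic matrix is cyclic (with $e_1$ a cyclic vector), $M\sim C(\chi_M)$, and the lemma reduces to showing that $D$ can be chosen so that $\chi_M=P$.

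The crucial observation to establish next is that $\chi_M$ depends \emph{affinely} on the entries of $D$. In the Leibniz expansion of $\det(XI-M)$, the nonzero entries of rows $n+1,\ldots,n+p$ lie only in the $p+1$ columns $\{n,n+1,\ldots,n+p\}$. A summand involving two distinct $D$-entries $D_{i_1,j_1}$ and $D_{i_2,j_2}$ would force the associated permutation $\sigma$ to send $i_1\mapsto n+j_1$ and $i_2\mapsto n+j_2$, leaving only $p-1$ columns of $\{n,\ldots,n+p\}$ available to receive the $p$ rows $\{n+1,\ldots,n+p\}$---impossible by pigeonhole. Hence
$$\chi_M(X)=F_0(X)+\sum_{1\leq i\leq n,\,1\leq j\leq p}D_{i,j}\,G_{i,j}(X),$$
where $F_0$ and the $G_{i,j}$ do not depend on $D$. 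Setting $D=0$ makes $M$ block lower-triangular, yielding $F_0=\chi_A\chi_B$.

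A cofactor expansion along row $i$ identifies $G_{i,j}(X)$, up to sign, with the determinant of the minor obtained from $XI-M$ by deleting row $i$ and column $n+j$. Because $M$ vanishes strictly below its subdiagonal, this minor is block upper-triangular, with top-left block $XI_{i-1}-A_{[i-1]}$ (the top-left $(i-1)\times(i-1)$ principal submatrix of $XI-A$, still a good cyclic matrix) and a bottom-right block which itself splits as block upper-triangular---an upper triangular sub-block with $-1$'s on the diagonal contributing only a sign, and then the bottom-right $(p-j)\times(p-j)$ principal submatrix of $XI-B$ contributing $\chi_{B^{[p-j]}}(X)$. One thus finds $G_{i,j}(X)=\pm\chi_{A_{[i-1]}}(X)\,\chi_{B^{[p-j]}}(X)$, a polynomial of degree exactly $(i-1)+(p-j)$ with nonzero leading coefficient.

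Finally, as $(i,j)$ runs over $\{1,\ldots,n\}\times\{1,\ldots,p\}$ the degrees $(i-1)+(p-j)$ realize every value in $\{0,1,\ldots,n+p-2\}$; polynomials of pairwise distinct degrees being linearly independent, we can extract from the $G_{i,j}$ a basis of the space of polynomials of degree $\leq n+p-2$. Since $\chi_M$ is monic of degree $n+p$ with its $X^{n+p-1}$-coefficient fixed at $-(\tr A+\tr B)=-\tr P$ independently of $D$, finding $D$ with $\chi_M=P$ reduces to solving the linear system $\sum_{i,j}D_{i,j}G_{i,j}=P-\chi_A\chi_B$, whose right-hand side has degree $\leq n+p-2$ and therefore lies in the span of the $G_{i,j}$. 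Combined with the similarity $M\sim C(\chi_M)$ from the first step, this produces the desired $D$. I expect the main obstacle to be the affinity claim; once it is in place the rest amounts to degree counting and linear algebra.
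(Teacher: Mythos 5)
Your proof is correct, but note that the paper does not actually prove Lemma~\ref{cyclicfit}: it is imported verbatim from the reference \cite{dSPLC3}, so there is no in-paper argument to compare yours against. Taken on its own merits, your argument is complete and self-contained. The three key points all check out: (1) the block matrix $M$ is again a good cyclic matrix for any $D$, hence $e_1$ is a cyclic vector and $M\sim C(\chi_M)$; (2) the pigeonhole argument on the Leibniz expansion correctly shows that no permutation can pick up two entries of $D$, since rows $n+1,\dots,n+p$ of $XI-M$ are supported on the $p+1$ columns $n,\dots,n+p$, so $\chi_M$ is affine in $D$ with constant term $\chi_A\chi_B$; and (3) the cofactor of the $(i,n+j)$ entry does factor as $\pm\chi_{A_{[i-1]}}\,\chi_{B^{[p-j]}}$ because deleting that row and column leaves a block-triangular minor whose middle block is triangular with $-1$'s on the diagonal. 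Since the degrees $(i-1)+(p-j)$ exhaust $\{0,\dots,n+p-2\}$, the polynomials $G_{i,j}$ span the space of polynomials of degree at most $n+p-2$, and since the two top coefficients of $\chi_M$ are pinned independently of $D$ (monicity and $\tr M=\tr A+\tr B=\tr P$), the target $P-\chi_A\chi_B$ lies in that span. One small point worth making explicit: the coefficient of $D_{i,j}$ must be computed as the cofactor of $XI-M_0$ with the \emph{other} entries of $D$ set to zero, which is legitimate precisely because the affinity established in step (2) guarantees no cross terms; you use this implicitly and it is fine, but it deserves a sentence.
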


\section{General results on minimal decompositions}

\begin{Not}
For $n \in \N^*$, we let
$s_n(\K)$ denote the lowest integer $N$ such that every matrix
$A \in \Mat_n(\K)$ with $\tr A \in \F_p$ is a sum of $N$ idempotents.
\end{Not}

\noindent A lower bound for $s_n(\K)$ can easily found using the trace:

\begin{prop}\label{minoration}
For every integer $n \geq 1$, one has:
$$s_n(\K) \geq \frac{p-1}{n}$$
and equality cannot hold if $n>1$.
\end{prop}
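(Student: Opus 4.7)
The plan is to exploit the fact that for an idempotent $P\in\Mat_n(\K)$, one has $\tr P=\rg(P)\cdot 1_\K$ with $\rg(P)\in\{0,1,\dots,n\}$. This confines the trace of any sum of $N$ idempotents in $\Mat_n(\K)$ to the image of $\{0,1,\dots,Nn\}\subset\Z$ under reduction modulo $p$.

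For the lower bound, observe that for each $t\in\F_p$ the matrix having $t$ in position $(1,1)$ and $0$ elsewhere belongs to $\Mat_n(\K)$ and has trace $t$, so it must be a sum of $s_n(\K)$ idempotents. Hence the set $\{m\cdot 1_\K : 0\leq m\leq s_n(\K)\cdot n\}$ must exhaust $\F_p$, which forces $s_n(\K)\cdot n\geq p-1$, giving $s_n(\K)\geq (p-1)/n$.

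For the strict inequality when $n>1$: the case $n\nmid (p-1)$ is automatic, since $s_n(\K)$ is an integer while $(p-1)/n$ is not. In the remaining case, set $N:=(p-1)/n$, so $Nn=p-1$, and choose a non-scalar matrix $A\in\Mat_n(\K)$ with $\tr A=-1_\K$; such an $A$ exists because $n>1$ (e.g.\ the matrix whose $(1,1)$ entry is $-1$ and whose other entries vanish). Suppose for contradiction $A=P_1+\cdots+P_N$ with each $P_i$ idempotent. Then $\sum_i\rg(P_i)$ is a non-negative integer at most $Nn=p-1$ and congruent to $-1 \pmod p$, so it equals $p-1$. Combined with $\rg(P_i)\leq n$, this forces $\rg(P_i)=n$, and hence $P_i=I_n$, for every $i$. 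But then $A=N\cdot I_n$ is scalar, contradicting our choice of $A$. The only mildly delicate point is this rigidity at the extremal value $Nn=p-1$: the trace-versus-rank bound leaves no freedom other than taking every idempotent equal to $I_n$, and it is precisely this rigidity that is broken as soon as $n>1$.
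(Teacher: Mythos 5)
Your proposal is correct and follows essentially the same route as the paper: bound the trace of a sum of $N$ idempotents by $Nn$ via ranks to get $n\,s_n(\K)\geq p-1$, then observe that equality would force every idempotent in the decomposition of a trace-$(p-1).1_\K$ matrix to be $I_n$, which fails for a non-scalar such matrix when $n>1$. Your write-up merely makes the extremal rigidity argument slightly more explicit (including the case $n\nmid(p-1)$, which the paper leaves implicit).
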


\begin{proof}
Let $n \in \N^*$. Let $M \in \Mat_n(\K)$ such that $\tr M=(p-1).1_\K$. \\
Then $M$ is a sum of $s_n(\K)$ idempotents, each with a trace of the form $k.1_\K$
for some $k \in \lcro 0,n\rcro$, so $n\,s_n(\K) \geq p-1$.
If $n\,s_n(\K)=p-1$, then $M$ would be a sum of $s_n(\K)$ copies of $I_p$, so
it would be scalar. However, if $n \geq 2$, we can find a non-scalar $M \in \Mat_n(\K)$ such that
$\tr M=(p-1).1_\K$, so equality $n\,s_n(\K)=p-1$ cannot hold.
\end{proof}

\begin{theo}\label{sumof5}
For all $n \in \N \setminus \{0,1\}$, we have
$$s_n(\K) \leq 5+\Bigl[\frac{p-1}{n}\Bigr],$$
where $[x]$ denotes the greatest integer $k$ such that $k \leq x$. \\
In particular, if $n \geq p$, then every matrix of $\Mat_n(\K)$
with trace in $\F_p$ is a sum of five idempotents.
\end{theo}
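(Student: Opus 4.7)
My plan is to reduce $A$ to a trace-zero matrix by peeling off a small number of explicit idempotents, and then to establish the key claim that every trace-zero matrix of size $n\ge 2$ is a sum of four idempotents; combining both steps will yield $s_n(\K)\le 5+\lfloor (p-1)/n\rfloor$. Writing $\tr A=k\cdot 1_\K$ with $0\le k\le p-1$, I perform the Euclidean division $k=qn+r$ with $0\le r<n$, so that $q\le\lfloor (p-1)/n\rfloor$, and I set $E_r:=D(1,\dots,1,0,\dots,0)\in\Mat_n(\K)$ (with $r$ ones on the diagonal). Then $B:=A-qI_n-E_r$ has trace zero, and this reduction has cost me $q+1$ idempotents, so it suffices to prove the key claim: four more idempotents will then give the total bound.

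To prove the key claim, I first dispose of the scalar case: if $B=\lambda I_n$ with $\lambda\neq 0$, then $n\lambda=0$ forces $p\mid n$, and I handle this via the perturbation $B=(B-N)+N$ by a nonzero nilpotent $N$; the non-scalar argument below applies to $B-N$, and $N$ can be absorbed using Theorem~\ref{HPsumcar2} in characteristic $2$ or a tailored cyclic construction in odd characteristic. Otherwise $B$ is non-scalar, and Fillmore's theorem provides $B\sim C$ with all diagonal entries of $C$ zero; since being a sum of four idempotents is similarity-invariant, I may replace $B$ by $C$. I then split $C=U+L$ into its strictly upper and strictly lower triangular parts, both nilpotent. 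In characteristic $2$, each of $U$ and $L$ is triangularizable with eigenvalues in $\{0,1\}$, and hence a sum of two idempotents by Theorem~\ref{HPsumcar2}, so $C$ is a sum of four idempotents and the claim is established.

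In characteristic $\neq 2$, this naive splitting fails, because by Theorem~\ref{HPsum} a nonzero nilpotent matrix is not a sum of two idempotents. My strategy is instead to decompose $C=M_1+M_2$ with each $M_i$ satisfying the hypotheses of Theorem~\ref{HPsum}; the most convenient choice is to arrange each $M_i$ to be similar to $I_n$ plus a nilpotent, so that both conditions of that theorem become vacuous. The Choice of Polynomial Lemma (Lemma~\ref{cyclicfit}) is the central tool: by writing the candidate $M_i$ as a block of two good cyclic matrices joined by the elementary coupling $H_{p,n-p}$, I can prescribe $\chi_{M_i}$ subject only to the trace constraint imposed by the diagonals of the good cyclic blocks.

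The main obstacle is precisely this odd-characteristic case. A sum of two idempotents with no eigenvalue in $\{0,2\}$ has trace exactly $n\cdot 1_\K$, so forcing $\tr M_1+\tr M_2=0$ in $\K$ either requires one of the $M_i$ to acquire eigenvalues in $\{0,2\}$ (subject to the intertwining condition of Theorem~\ref{HPsum}) or uses the congruence $2n\equiv 0\pmod p$ when it holds. Realising $C$ \emph{equal to}, and not merely similar to, such a sum of two matrices with prescribed cyclic characteristic polynomials is where Lemma~\ref{cyclicfit} does the heavy lifting and where most of the technical work will reside.
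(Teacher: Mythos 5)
Your reduction to the trace-zero case is fatally flawed: the key claim that every trace-zero matrix of size $n \geq 2$ is a sum of four idempotents is \emph{false} in general. If $M=\sum_{i=1}^{4}P_i$ with each $P_i$ idempotent of rank $r_i$, then $\tr M=(r_1+r_2+r_3+r_4).1_\K$ with $0 \leq \sum r_i \leq 4n$; so if $\tr M=0$ then $p$ divides $\sum r_i$, and whenever $p>4n$ this forces $\sum r_i=0$, i.e.\ $M=0$. Thus for $p \geq 11$ and $n=2$, the nonzero nilpotent $\begin{bmatrix} 0 & 1 \\ 0 & 0\end{bmatrix}$ has trace zero but is not a sum of four idempotents (it needs at least $\lceil p/2\rceil=6$), even though the theorem's bound $5+\bigl[\frac{p-1}{n}\bigr]=10$ comfortably accommodates it. The point you are missing is that normalizing the trace to zero is not a simplification here: a nonzero trace-zero matrix forces the ranks of the summands to add up to a positive multiple of $p$, hence to at least $p$, so the number of idempotents needed grows like $p/n$ regardless of the trace of the original matrix. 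This is why the $\bigl[\frac{p-1}{n}\bigr]$ copies of $I_n$ cannot be ``peeled off'' up front as you do; they must be spent \emph{inside} the decomposition of the remaining piece to supply the missing rank. Beyond this, you yourself flag that the odd-characteristic non-scalar case is left as ``where most of the technical work will reside,'' so even ignoring the counterexample the argument is incomplete; your characteristic-$2$ splitting into strictly upper and strictly lower triangular nilpotent parts is fine, but it is the easy case.

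For comparison, the paper's route avoids the trap entirely: it first uses the rational canonical form and two explicit block idempotents $Q_1,Q_2$ to make $A-Q_1-Q_2$ a good cyclic matrix, and then proves that a cyclic matrix $B$ with trace $k.1_\K$, $k=a\,n+\ell$, decomposes as $a$ copies of $I_n$ plus three further idempotents: one subtracts $(a+1).I_n$ and a rank-$\ell'$ idempotent, applies Lemma \ref{cyclicfit} to force the remainder to be similar to $C(X^{n-1}(X+1))$ or $C(X^n)$, and invokes Theorems \ref{HPdiff} and \ref{HPsumcar2} to write that remainder as a difference of two idempotents. The copies of $I_n$ there are exactly what absorbs both the trace \emph{and} the rank obstruction simultaneously. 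If you want to salvage your approach, you should aim to prove that every good cyclic matrix with trace $k.1_\K$ is a sum of $3+\lceil k/n\rceil$ (or so) idempotents, rather than separating the trace correction from the rest.
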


\noindent Used in conjunction with Proposition \ref{minoration}, this yields:

\begin{cor}
For all $n \in \N \setminus \{0,1\}$,
$$1+\Bigl[\frac{p-1}{n}\Bigr] \leq s_n(\K) \leq 5+\Bigl[\frac{p-1}{n}\Bigr].$$
\end{cor}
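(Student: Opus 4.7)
The plan is just to glue together the two results immediately preceding the corollary. The upper bound $s_n(\K) \le 5 + \bigl[\frac{p-1}{n}\bigr]$ is literally the statement of Theorem \ref{sumof5}, so nothing needs to be said on that side.

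For the lower bound, I start from Proposition \ref{minoration}, which provides $s_n(\K) \ge \frac{p-1}{n}$ together with the crucial strictness statement for $n \ge 2$. Since $s_n(\K)$ is an integer, the strict inequality $s_n(\K) > \frac{p-1}{n}$ is equivalent to $s_n(\K) \ge \bigl[\frac{p-1}{n}\bigr] + 1$, irrespective of whether $\frac{p-1}{n}$ is itself an integer: when $n \mid (p-1)$, the strictness upgrades the naive bound by exactly one unit; when $n \nmid (p-1)$, the smallest integer strictly above $\frac{p-1}{n}$ is already $\bigl[\frac{p-1}{n}\bigr] + 1$. In both subcases I obtain the announced $s_n(\K) \ge 1 + \bigl[\frac{p-1}{n}\bigr]$.

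There is no genuine obstacle here: all the substance lies in Theorem \ref{sumof5} and in the strictness clause of Proposition \ref{minoration}. The only point worth watching is precisely that strictness, without which the case $n \mid (p-1)$ would yield only the weaker bound $s_n(\K) \ge \bigl[\frac{p-1}{n}\bigr]$. Beyond that, what the corollary really records is that, up to the additive constant $5$, the trivial trace-based lower bound and the constructive upper bound coincide, so that both are asymptotically sharp in $p$ for fixed large $n$.
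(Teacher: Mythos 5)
Your proposal is correct and follows exactly the route the paper intends: the corollary is stated as an immediate combination of Theorem \ref{sumof5} (upper bound) and Proposition \ref{minoration} (lower bound), and your integrality argument upgrading $s_n(\K) > \frac{p-1}{n}$ to $s_n(\K) \geq 1 + \bigl[\frac{p-1}{n}\bigr]$ is precisely the step the paper leaves implicit. Nothing is missing.
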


\begin{proof}[Proof of Theorem \ref{sumof5}]
Let $n \in \N\setminus \{0,1\}$ and $A \in \Mat_n(\K)$ such that~$\tr A \in \F_p$.
The proof has two major steps:
\begin{enumerate}[(i)]
\item There are two idempotents $Q_1$ and $Q_2$ such that
$A-Q_1-Q_2$ is cyclic.
\item Every cyclic matrix of $\Mat_n(\K)$ with trace in $\F_p$
is a sum of $3+\bigl[\frac{p-1}{n}\bigr]$ idempotents.
\end{enumerate}
By reduction to a rational canonical form, we can find companion matrices
$C(P_1),\dots,C(P_N)$ such that
$$A \sim D\bigl(C(P_1),\dots,C(P_N)\bigr),$$
hence $A$ can be replaced with $A':=D\bigl(C(P_1),\dots,C(P_N)\bigr)$.
Let $n_k:=\deg P_k$ for all $k \in \lcro 1,N\rcro$.
Let then
$$Q_1:=\begin{bmatrix}
F_{n_1} & 0 & 0 & &   \\
-H_{n_2,n_1} & 0_{n_2} & \ddots  & &  \\
0 & 0 & F_{n_3} & \ddots &  & \\
\vdots & 0 & -H_{n_4,n_3} & 0_{n_4} & \ddots & \\
  &   &      0       &    & \ddots & \\
 & &  & \ddots &
\end{bmatrix} \in \Mat_n(\K)$$
and
$$Q_2:=\begin{bmatrix}
0_{n_1} & 0 & 0 & &   \\
0 & F_{n_2} & \ddots  & &  \\
0 & -H_{n_3,n_2} & 0_{n_3} & \ddots &  & \\
\vdots & 0 & 0 & F_{n_4} &  \ddots & \\
  &   &      & -H_{n_5,n_4} & 0_{n_5}      & \ddots & \\
 & &  &  & & \\
 & &   &     & &
\end{bmatrix} \in \Mat_n(\K).$$
Straightforward computation shows that $Q_1$ and $Q_2$ are idempotents,
and \\
$A-Q_1-Q_2$ is clearly a good cyclic matrix
with trace $\tr A-\tr Q_1-\tr Q_2 \in \F_p$. \\
It now remains to prove step (ii). \\
Let then $B \in \Mat_n(\K)$ be a cyclic matrix with trace $t \in \F_p$. \\
Without loss of generality, we may assume $B$ is a companion matrix. \\
It will of course suffice to prove that $B-I_n$
can be written as $-Q_0+\underset{k=1}{\overset{2+[(p-1)/k]}{\sum}}Q_k$
where the $Q_k$'s are idempotents. \\
Set $k \in \lcro 0,p-1\rcro$ such that $\tr (B-I_n) =k.1_\K$.
We can decompose
$k=a\,n+\ell$ for some $a \in \lcro 0,[(p-1)/n]\rcro$
and some $\ell \in \lcro 0,n-1\rcro$. \\
Set then $\ell':=\max(\ell,1)$, and let us decompose
$$B-(a+1).I_n=\begin{bmatrix}
B_1 & D_1 \\
H_{n-\ell',\ell'} & B_2
\end{bmatrix} \quad \text{with $B_1 \in \Mat_{\ell'}(\K)$, $B_2 \in \Mat_{n-\ell'}(\K)$ and
$D_1 \in \Mat_{\ell',n-\ell'}(\K)$.}$$
The matrices $B_1-I_{\ell'}$ and $B_2$ are good cyclic ones,
and
$$\tr(B_1-I_{\ell'})+\tr B_2=\tr B-(a\,n+\ell').1_\K \in \{0,-1\},$$
so Lemma \ref{cyclicfit} provides some $D_2\in \Mat_{\ell',n-\ell'}(\K)$ such that
$B':=\begin{bmatrix}
B_1-I_{\ell'} & D_2 \\
H_{n-\ell',\ell'} & B_2
\end{bmatrix}$
is similar to either $C(X^{n-1}(X+1))$ or $C(X^n)$.
In any case, Theorems \ref{HPdiff} and \ref{HPsumcar2} show that
$B'=B-(a+1).I_n-\begin{bmatrix}
I_{\ell'} & D_1-D_2 \\
0 & 0
\end{bmatrix}$ is a difference of two idempotents.
Since
$\begin{bmatrix}
I_{\ell'} & D_1-D_2 \\
0 & 0
\end{bmatrix}$ is an idempotent itself, we conclude that $B$ is a sum of $a+3$ idempotents, which
finishes our proof.
\end{proof}

\section{The case of $\F_2$ and $\F_3$}

\begin{prop}
Assume $\# \K \leq 3$. Then, for every $n \in \N^*$, every matrix of $\Mat_n(\K)$
is a sum of three idempotents.
\end{prop}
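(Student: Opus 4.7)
The plan is to adapt the strategy of Theorem \ref{sumof5} but save two idempotents by replacing the pair $(Q_1, Q_2)$ with a single rank-$1$ idempotent, exploiting the fact that over $\F_2$ or $\F_3$ the traces of ``nice'' monic polynomials of degree $n$ cover all of $\F_p$.

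First, I would reduce to the case $A = C(P)$ of a single companion matrix, since being a sum of three idempotents is preserved under similarity and under direct sums, so the rational canonical form of $A$ handles the general case block by block. The scalar case $\deg P = 1$ is immediate: every element of $\F_p$ is a sum of at most $p - 1 \leq 2$ copies of $1$. For $n := \deg P \geq 2$, I would decompose
$$C(P) = \begin{bmatrix} 0 & D_0 \\ H_{n-1, 1} & C(Q) \end{bmatrix}$$
with $C(Q)$ the trailing $(n-1)\times(n-1)$ companion block, and, for any row $D \in \Mat_{1, n-1}(\K)$, consider the rank-$1$ idempotent $Q_0 := \begin{bmatrix} 1 & D \\ 0 & 0 \end{bmatrix}$. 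Both the top-left block $-I_1$ and the bottom-right block $C(Q)$ of $C(P) - Q_0$ are good cyclic, so Lemma \ref{cyclicfit} would let me choose $D$ so that $C(P) - Q_0 \sim C(P')$ for any prescribed monic $P' \in \K[X]$ of degree $n$ with $\tr P' = \tr C(P) - 1$. If additionally $C(P')$ is a sum of two idempotents, then $C(P) = Q_0 + (C(P) - Q_0)$ is a sum of three.

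It would then suffice, for each $t \in \F_p$, to produce a monic $P'$ of degree $n$ with $\tr P' = t$ such that $C(P')$ is a sum of two idempotents. Over $\F_2$, I would take $P' \in \{X^n,\ X^{n-1}(X+1)\}$ (traces $0$ and $1$); both have all eigenvalues in $\{0,1\}$ and are triangularizable, so Theorem \ref{HPsumcar2} applies. Over $\F_3$, I would take one of $(X-1)^n$, $X(X-1)^{n-1}$, $(X-2)(X-1)^{n-1}$, whose traces $n$, $n-1$, $n+1$ modulo $3$ exhaust $\F_3$; each corresponding $C(P')$ is similar to a block-diagonal matrix whose nontrivial block has the form $I_m + N$ with $N$ nilpotent, and writing $N = R - R'$ via Theorem \ref{HPdiff} gives $I_m + N = (I_m - R') + R$, a sum of two idempotents that extends block-diagonally (using $2 = 1 + 1$ to absorb any leading $(2)$ block). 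The hard part will be this trace-matching step, which succeeds for $p \in \{2, 3\}$ precisely because the achievable traces of these nice polynomials saturate $\F_p$---a coincidence that fails for larger primes.
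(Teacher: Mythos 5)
Your proof is correct and follows essentially the same strategy as the paper's: reduce to a single companion matrix, peel off a rank-one idempotent so that the remainder is (similar to) the companion matrix of a polynomial of the same trace minus one whose roots all lie in the prime field with controlled multiplicities, and conclude via the sum-of-two-idempotents theorems. The only cosmetic difference is that the paper modifies the last column of $C(P)$ directly, taking $P_1=(X-1)^{n-1}(X-\tr P+n.1_\K)$ so that $C(P)-C(P_1)$ is visibly a rank-one idempotent without any appeal to Lemma \ref{cyclicfit}, whereas you peel the idempotent off the first row and invoke Lemma \ref{cyclicfit} to steer the remainder to your chosen $P'$.
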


\begin{proof}
If $\# \K \leq 2$, then the previous theorem is an easy consequence of Theorem 1 of \cite{dSPLC3}, but we will
give here a more elementary proof. By reduction to the rational canonical form, it suffices
to prove that every \emph{cyclic} matrix of $\Mat_n(\K)$ is a sum of three idempotents. \\
Let then $P \in \K[X]$ be a monic irreducible polynomial of degree $m$.
Set $J:=(\delta_{i,j+1})_{1 \leq i,j \leq n}$, and let us write
$$C(P)=\begin{bmatrix}
J & C \\
H_{1,n-1} & \tr P
\end{bmatrix} \quad \text{with $C \in \Mat_{n-1,1}(\K)$.}$$
Set
$P_1:=(X-1)^{m-1}(X-\tr P+m.1_\K)$, so
$$C(P_1)=\begin{bmatrix}
J & C_1 \\
H_{1,n-1} & \tr P-1
\end{bmatrix} \quad \text{for some $C_1 \in \Mat_{n-1,1}(\K)$}$$
and $C(P_1)$ is a sum of two idempotents by Theorems \ref{HPsum} and \ref{HPsumcar2} since
$\# \K \leq 3$.
Finally
$$C(P)-C(P_1)=\begin{bmatrix}
0 & C-C_1 \\
0 & 1
\end{bmatrix}$$
is an idempotent, so $C(P)$ is a sum of three idempotents.
\end{proof}

The previous result fails for fields with at least $4$ elements, even
if we only consider matrices with trace in $\F_p$:

\begin{enumerate}[(i)]
\item Assume $\K \simeq \F_p$ for some prime $p \geq 5$. \\
Then $(p-1).I_n$ is not a sum of three idempotents. Indeed,
for any idempotent $Q$, the matrix $(p-1).I_n-Q$ is never a sum of two idempotents
since it is diagonalizable with eigenvalues in $\{(p-1)1_\K,(p-2)1_\K\}$
(cf. Theorem \ref{HPsum}).

\item Assume $\K$ is not a prime field.
Let $\alpha \in \K \setminus \F_p$. Then the matrix $\alpha.I_p$
has trace $0$, and the same line of reasoning as in (i) shows that it is not
a sum of three idempotents.
\end{enumerate}

\section{Fields of characteristic $2$ or $3$}

\begin{prop}
Set $p:=\car(\K)$ and assume $p \in \{2,3\}$.
Then every matrix of $\Mat_n(\K)$ which is a sum of idempotents
is actually a sum of four idempotents.
\end{prop}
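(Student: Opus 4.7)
The plan is to adapt the strategy of Theorem \ref{sumof5} (which gives $s_n(\K) \leq 5$ for $n \geq p$) so as to save one idempotent in the restricted setting $p \in \{2,3\}$. I split into small and large $n$.

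For $n=1$, the matrix $A$ reduces to an element of $\F_p$, a sum of at most $p-1 \leq 2$ copies of $1 \in \F_p$; this is well within the bound of four idempotents.

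For $n \geq 2$ in characteristic $2$, I would invoke Fillmore's theorem (as cited in the proof of the trace criterion of Section 3): if $A$ is non-scalar with $\tr A \in \F_2$, then $A$ is similar to a zero-diagonal matrix $A' = N_+ + N_-$, with $N_\pm$ strictly upper and lower triangular. Each $N_\pm$ is nilpotent with sole eigenvalue $0$, hence a sum of two idempotents by Theorem \ref{HPsumcar2}; thus $A$ is a sum of four idempotents. For the remaining scalar case $A = \alpha I_n$ with $\alpha \in \K \smallsetminus \F_2$, the trace constraint $n\alpha \in \F_2$ forces $n$ to be even, and I would build four rank-one idempotents $Q_i = v_i w_i^T$ whose sum equals $\alpha I_n$, by solving a finite linear system over $\K$.

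For $n \geq 3$ in characteristic $3$, the Fillmore approach fails directly, because nilpotent matrices are differences (not sums) of two idempotents in this characteristic. I would instead refine the cyclic reduction of Theorem \ref{sumof5}: its proof gives $A = Q_1 + Q_2 + B$ with $B$ a good cyclic matrix of trace in $\F_p$, and then $B = I_n + E + Q_3 - Q_4$ with $E$ and $Q_4$ idempotents. In characteristic $3$ we have $I_n - Q_4 = I_n + 2Q_4$, which is itself idempotent, and combined with the identity $-Q = 2Q$ a careful regrouping of the terms in $I_n + E + Q_3 - Q_4$ is expected to save exactly one idempotent. For $n=2$ in characteristic $3$ (where Theorem \ref{sumof5} only gives the weaker bound $6$), I would enumerate the rational canonical forms (scalar matrices and $2 \times 2$ cyclic matrices) and treat each case directly.

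The main obstacle is the scalar case in characteristic $2$, which requires an explicit construction working over every field $\K$ with $\car \K = 2$ and every even $n \geq 2$. For characteristic $3$, the precise bookkeeping needed to save the last idempotent in the cyclic step is likely the most delicate part of the proof.
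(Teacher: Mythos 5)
Your characteristic-$2$, non-scalar argument via Fillmore's theorem is an attractive elementary route for that sub-case (the paper instead subtracts a single block idempotent to reach a good cyclic matrix), but even there it has a hole: a matrix of trace $1$ is \emph{not} similar to a zero-diagonal matrix, so you would need, say, the diagonal $(1,0,\dots,0)$ together with the ``triangularizable with eigenvalues in $\{0,1\}$'' clause of Theorem \ref{HPsumcar2} to keep the count at four. The genuinely fatal step, however, is your treatment of the scalar case in characteristic $2$: a sum of four rank-one idempotents $v_iw_i^T$ has rank at most $4$, whereas $\alpha.I_n$ has rank $n$, so the linear system you propose has no solution once $n\geq 5$. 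The paper instead checks that $\alpha.I_2$ is a sum of four idempotents (an idempotent plus a cyclic matrix, the latter handled by step (ii) of the proof of Theorem \ref{sumof5}) and, more generally, ``mods out'' all $\alpha.I_p$ blocks from the rational canonical form before doing anything else.

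For characteristic $3$ the proposed regrouping saves nothing. With $a=0$ the cyclic step of Theorem \ref{sumof5} yields $B=I_n+E+Q_3-Q_4$, and your regrouping $B=E+Q_3+(I_n-Q_4)$ is precisely the three-idempotent decomposition that the theorem already records (indeed $I_n-Q_4$ is idempotent in every characteristic); adding back the two idempotents $Q_1,Q_2$ from step (i) leaves you at five. The idempotent the paper saves is one of \emph{those two}: after moding out the $\alpha.I_p$ blocks one can arrange the rational canonical form so that every \emph{intermediate} companion block has degree at least $2$, and then the single matrix $Q$ carrying all the $F_{n_k}$ diagonal blocks and all the $-H_{n_{k+1},n_k}$ connectors at once is itself idempotent, so that $A-Q$ is already a good cyclic matrix. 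That one-idempotent reduction, followed by a three-idempotent decomposition of the resulting good cyclic matrix (via Lemma \ref{cyclicfit} and Theorems \ref{HPsum} and \ref{HPsumcar2}), is the substance of the proof; your plan is missing this idea, and without it the characteristic-$3$ count stalls at five.
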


\begin{proof}
Let $A \in \Mat_n(\K)$ such that $\tr A \in \F_p$.
By reduction to a rational canonical form, we find that
$A \sim D\bigl(C(P_1),C(P_2),\dots,C(P_N),\alpha.I_q\bigr)$
for some monic polynomials $P_1,\dots,P_N$ of degree at least $2$, some $\alpha \in \K$ and some $q \in \N$.
We first study the case $q=p$ and $N=0$.
\begin{itemize}
\item Assume $\car(\K)=2$. Then $\alpha.I_2$ is a sum of four idempotent matrices:
indeed $\alpha.I_2-\begin{bmatrix}
1 & 0 \\
-1 & 0
\end{bmatrix}$ is cyclic, so it is a sum of three idempotents (according to point (ii) in the proof of Theorem \ref{sumof5}), whilst
$\begin{bmatrix}
1 & 0 \\
-1 & 0
\end{bmatrix}$ is idempotent.
\item Assume $\car(\K)=3$. We then contend that $\alpha.I_3$ is a sum of four idempotents:
setting $\beta:=\alpha-2$, it suffices to prove that $\beta.I_3$ is a $(1,-1,1,-1)$-composite.
Indeed, we know that $D(0,\beta,-\beta)$ is a difference of two idempotents, and
$\beta.I_3-D(0,\beta,-\beta)=D(\beta,0,-\beta)$ is also a difference of two idempotents,
which proves our claim\footnote{More generally, for any field $\K$ of characteristic $p > 0$,
and every $\alpha \in \K$, the scalar
matrix $\alpha.I_p$ is a sum of four idempotents.}.
\end{itemize}
In any case, we may reduce the study to the case
$q \in \{0,1,2\}$ by ``moding out" the $\alpha.I_p$ blocks
(notice that the trace is unaltered by doing so).
From now on, we will assume $q \in \{0,1,2\}$.

\vskip 2mm
\noindent If then $q=2$, we write $A \sim D(\alpha,C(P_1),C(P_2),\dots,C(P_N),\alpha)$.
In any case, we have found non-constant monic polynomials
$R_1,\dots,R_M$ such that $\deg R_i \geq 2$ for all $i \in \lcro 2,M-1\rcro$ and
$$A \sim A':=D\bigl(C(R_1),C(R_2),\dots,C(R_M)\bigr).$$
It will thus suffice to prove that $A'$ is a sum of four idempotents. \\
For every $k \in \lcro 1,M\rcro$, set $n_k:=\deg R_k$, and define
$$Q:=\begin{bmatrix}
F_{n_1} & 0 & \cdots & 0 \\
-H_{n_2,n_1} & F_{n_2} & \ddots & \vdots \\
0 & \ddots & \ddots & 0  \\
 & 0 & -H_{n_N,n_{N-1}} & F_{n_N}
 \end{bmatrix}.$$
Then $Q$ is idempotent and $A-Q$ is a good cyclic matrix
with trace in $\F_p$. We write
$A-Q=\begin{bmatrix}
B_1 & C \\
H_{1,n-1} & B_2
\end{bmatrix}$ with $B_1 \in \Mat_{n-1}(\K)$, $C \in \Mat_{n-1,1}(\K)$ and $B_2 \in \K$. \\
If $p=2$, then the proof from Proposition \ref{sumof5} shows that $A-Q$ is a sum of three idempotents. \\
Assume finally that $p=3$ and set $\delta:=\tr(A-Q)-(n+1).1_\K$. By Lemma \ref{cyclicfit}, there
exists a column matrix $C' \in \Mat_{n-1,1}(\K)$ such that
$$\begin{bmatrix}
B_1 & C' \\
H_{1,n-1} & B_2-(n+1).1_\K
\end{bmatrix} \sim C(X^{n-1}(X-\delta)).$$
Since $\delta \in \{0,1,-1\}$, Theorems \ref{HPsum} and \ref{HPsumcar2} then show that
$$A-Q-I_n-\begin{bmatrix}
0 & C-C' \\
0 & 1
\end{bmatrix}=\begin{bmatrix}
B_1 & C' \\
H_{1,n-1} & B_2-(n+1).1_\K
\end{bmatrix}$$
is a difference of two idempotents, hence $A$ is a sum of four idempotents.
\end{proof}

\section{A lower asymptotic upper bound for prime fields}

In this final part, we will prove that for a prime field, the asymptotic bound of five idempotents from Theorem
\ref{sumof5} can actually be lowered to four.

\begin{theo}\label{asymptoticsum4}
Assume $\K=\F_p$ for some prime $p$. \\
Then there exists an integer $n_0$ such that, for every
$n \geq n_0$, any matrix of $\Mat_n(\K)$ is a sum of $4$ idempotents.
\end{theo}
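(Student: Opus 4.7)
The plan is to improve the two-step proof of Theorem~\ref{sumof5} by compressing the first step from two idempotents to one when $\K = \F_p$ and $n$ is large. Specifically, I want to construct a single idempotent $Q \in \Mat_n(\F_p)$ such that $A - Q$ is cyclic; since $\tr(A - Q) \in \F_p$ holds automatically and $n \geq p$, step~(ii) in the proof of Theorem~\ref{sumof5} then writes $A - Q$ as a sum of $3 + \bigl[\frac{p-1}{n}\bigr] = 3$ idempotents, for a total of $4$.

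For the single-idempotent cyclification, put $A$ in primary rational canonical form $A \sim D(C(P_1), \dots, C(P_M))$ with block sizes $n_i := \deg P_i$. When all $n_i \geq 2$, set
\[
Q := \begin{bmatrix}
F_{n_1} & & & \\
-H_{n_2,n_1} & F_{n_2} & & \\
& \ddots & \ddots & \\
& & -H_{n_M,n_{M-1}} & F_{n_M}
\end{bmatrix}.
\]
A direct block-matrix computation---crucially using the vanishing $H_{n_{i+1},n_i} H_{n_i,n_{i-1}} = 0$ that holds precisely because $n_i \geq 2$---verifies that $Q^2 = Q$ and that $A - Q$ is a good cyclic matrix. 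To arrive at the favourable case where all $n_i \geq 2$, I would first peel off every $\alpha I_p$ direct summand of $A$ for $\alpha \in \F_p$ (by the footnote to the Section~8 proposition, each is a sum of $4$ idempotents); after peeling, at most $p - 1$ residual size-$1$ primary blocks $[\alpha]$ remain per eigenvalue. Then, for each residual $[\alpha]$, I would absorb it into a size-$\geq 2$ primary block $C(R)$ with $R(\alpha) \neq 0$ using the classical similarity $D(C(R), [\alpha]) \sim C(R \cdot (X - \alpha))$.

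The main obstacle is the exceptional case in which the above absorption fails: this occurs precisely when all size-$\geq 2$ primary blocks of $A$ (after peeling) share a common eigenvalue $\alpha \in \F_p$ with the residual $[\alpha]$ blocks, i.e., when $A$ is similar to $\alpha I_n + N$ for some nilpotent $N$ with mixed Jordan block sizes. I would handle this case by applying Theorem~\ref{HPdiff} to write $N = R - R'$ as a difference of two idempotents, so that $A = R + (\alpha I_n - R')$, reducing the task to showing that $\alpha I_n - R'$ is a sum of three idempotents. The latter would proceed by a case analysis on $\alpha \in \F_p$ and on the rank of $R'$, exploiting the flexibility in the choice of the pair $(R, R')$ and mirroring the arguments of the Section~8 proposition for characteristics $2$ and $3$; the asymptotic hypothesis $n \geq n_0$ ensures enough room to carry out this construction.
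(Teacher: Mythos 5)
Your favourable case is sound: when every companion block has size at least $2$, the single block-bidiagonal idempotent $Q$ does satisfy $Q^2=Q$ and $A-Q$ is good cyclic, so step (ii) of the proof of Theorem \ref{sumof5} yields $3+\bigl[\frac{p-1}{n}\bigr]=3$ further idempotents; this is exactly the device the paper itself uses in Section 8 for $p\in\{2,3\}$. The gaps lie in the reduction to that case. First, peeling off the $\alpha I_p$ summands and treating them separately destroys the size advantage: the residual matrix has some size $n'$ which may be smaller than $p$, and step (ii) then only yields $3+\bigl[\frac{p-1}{n'}\bigr]>3$ idempotents for its cyclic part. Worse, no blockwise strategy can succeed here: for $p=11$ and $A=D(\alpha I_{11},\dots,\alpha I_{11},C(P))$ with $\deg P=2$ and $\tr P=-1$, the block $C(P)$ is not a sum of four idempotents at all (a sum of four idempotents in $\Mat_2(\F_{11})$ has trace in $\{0,1,\dots,8\}\cdot 1_\K$, which does not contain $-1$), so the four idempotents decomposing $A$ must couple the scalar part with the small companion block. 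This coupling is precisely what the paper's proof arranges in its second and third cases (transferring $t$ copies of $\alpha$ into $A_2$ before applying Corollary \ref{tracefitdiff}, or padding small blocks with zero blocks via the Embedding Lemma \ref{nilfit4}); your outline has no mechanism for it.

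Second, the exceptional case is both mischaracterized and unresolved. A configuration such as $D([\alpha],\dots,[\alpha],[\beta],C((X-\alpha)^m))$ with two or more residual $[\alpha]$ blocks cannot be absorbed (each block of size $\geq 2$ can swallow at most one $[\alpha]$, and only if $\alpha$ is not already a root of its polynomial), yet it is not of the form $\alpha I_n+N$. More seriously, your plan for the case $A\sim\alpha I_n+N$ rests on showing that $\alpha I_n-R'$ is a sum of three idempotents; since $R'$ is idempotent, this matrix is diagonalizable with eigenvalues $\alpha$ and $\alpha-1$, and in the degenerate situation $R'=0$ (e.g. $N=0$ with $p\nmid n$) the claim is refuted by Section 7 of the paper: $(p-1)I_n$ is not a sum of three idempotents when $p\geq 5$. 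The paper instead proves that $\alpha I_n$ is a $(1,-1,1,-1)$-composite for all large $n$ (Lemma \ref{scalarinto4}), which already requires a non-trivial combinatorial argument (orbits of two involutions of $\F_p$ together with a Frobenius-type number-theoretic lemma) and converts into a sum of four idempotents only through the substitution $-Q\mapsto(I-Q)-I$ applied to $A-2I_n$. None of this is recoverable from a case analysis on $\alpha$ and $\rg R'$, so the proposal has a genuine gap.
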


It will of course suffice to prove that, for some integer $n_0$, any matrix of $\Mat_n(\K)$ with $n \geq n_0$
is a $(1,-1,1,-1)$-composite.

\noindent
We start by tackling the case of scalar matrices:

\begin{lemme}\label{scalarinto4}
There exists an integer $n_0$ such that,
for every $\alpha \in \F_p$ and every integer $n \geq n_0$, the matrix
$\alpha.I_n$ is a $(1,-1,1,-1)$-composite.
\end{lemme}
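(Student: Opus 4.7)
The plan is to prove, for each $\alpha \in \F_p$, that the set
\[
S_\alpha := \{n \in \N : \alpha I_n \text{ is a } (1,-1,1,-1)\text{-composite}\}
\]
contains all sufficiently large integers, with a threshold depending only on $\alpha$. Since $\F_p$ is finite, taking $n_0$ to be the maximum of these thresholds will conclude. The first observation is that $S_\alpha$ is closed under addition, as the block-diagonal sum of two $(1,-1,1,-1)$-composites is still a $(1,-1,1,-1)$-composite; hence it is a sub-semigroup of $\N$, and it suffices to produce two elements of $S_\alpha$ whose greatest common divisor is one.

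One convenient element of $S_\alpha$ is $p$ itself: I would take $D_1 := \operatorname{diag}(0, 1, \ldots, p-1) \in \Mat_p(\F_p)$ and $D_2 := \alpha I_p - D_1$, which is diagonal with diagonal a permutation of $\{0, 1, \ldots, p-1\}$. Both $D_1$ and $D_2$ are diagonalizable with each element of $\F_p$ appearing as eigenvalue with multiplicity one, so the symmetry conditions of Theorem~\ref{HPdiff} hold trivially, and each is a $(1,-1)$-composite. Therefore $\alpha I_p = D_1 + D_2$ is a $(1,-1,1,-1)$-composite.

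For a second element of $S_\alpha$, I would continue searching among diagonal decompositions: looking for diagonal $D_1 \in \Mat_n(\F_p)$ with multiplicity function $m : \F_p \to \N$ such that both $D_1$ and $D_2 := \alpha I_n - D_1$ are $(1,-1)$-composites. Theorem~\ref{HPdiff} reduces this, via the diagonalizability of both matrices, to
\[
m(s) = m(-s) \text{ for } s \notin \{0, 1, -1\}, \qquad m(s) = m(2\alpha - s) \text{ for } s \notin \{\alpha, \alpha - 1, \alpha + 1\}.
\]
These symmetry conditions define an equivalence relation on $\F_p$ whose orbits $O_1, \ldots, O_k$ partition $\F_p$; the feasible sizes $n$ are exactly the non-negative integer combinations of the orbit sizes $|O_i|$ (one obtains $|O_i| \in S_\alpha$ by setting $m \equiv 1$ on $O_i$ and zero elsewhere). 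Since $\sum_i |O_i| = p$ is prime, if there are at least two orbits then each $|O_i|$ is strictly less than $p$, hence coprime to $p$, and the greatest common divisor of $\{|O_1|, \ldots, |O_k|\}$ is one. By the Sylvester-Frobenius theorem this forces $S_\alpha$ to be cofinite in $\N$, with a Frobenius conductor bounded by a function of $p$ alone.

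The main obstacle is establishing that there are at least two orbits for every $\alpha \in \F_p$. For $\alpha \in \{-2, -1, 0, 1, 2\}$ the intersection $\{0, 1, -1\} \cap \{\alpha, \alpha \pm 1\}$ is non-empty, and each element of this intersection forms a singleton orbit, producing multiple orbits immediately. For the remaining $\alpha$ (so the two exclusion triples are disjoint), I would track the orbit of $0$: the only admissible move from $0$ is $s \mapsto 2\alpha - s$, sending $0$ to $2\alpha$, and the subsequent chain, formed by alternating negation and reflection through $\alpha$, must terminate upon encountering an exceptional element where both moves are blocked. A careful case analysis (for instance, exhibiting that the element $\alpha$ itself cannot be reached from $0$ by admissible moves, since any such path would eventually be forced through a forbidden $T_1$-step at an element of $\{0,1,-1\}$) shows the orbit of $0$ is strictly smaller than $\F_p$. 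Once multiple orbits are in hand for every $\alpha$, $n_0$ is obtained by taking the maximum Frobenius conductor as $\alpha$ ranges over the finite set $\F_p$.
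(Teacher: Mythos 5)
Your overall architecture is the same as the paper's: reduce to finding a diagonal matrix $D$ such that both $D$ and $\alpha I_n-D$ are differences of idempotents, translate this via Theorem~\ref{HPdiff} into the two symmetry conditions on the multiplicity function, observe that the admissible multiplicity functions are those constant on the classes of the equivalence relation generated by $\sigma:k\mapsto -k$ (away from $\{1,-1\}$) and $\tau:k\mapsto 2\alpha-k$ (away from $\{\alpha+1,\alpha-1\}$), and finish with the Sylvester--Frobenius lemma once the relevant class sizes have gcd $1$. Your preliminary observation that $p\in S_\alpha$ always, via $D_1=\operatorname{diag}(0,1,\dots,p-1)$, is correct and a pleasant shortcut (the paper instead gets gcd $1$ from the fact that \emph{all} the class sizes sum to the prime $p$), and your handling of $\alpha\in\{-2,-1,0,1,2\}$ via singleton classes checks out.

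The genuine gap is the main case, where $\{0,1,-1\}$ and $\{\alpha,\alpha\pm1\}$ are disjoint: there you assert that ``a careful case analysis shows the orbit of $0$ is strictly smaller than $\F_p$'' without carrying it out, and this is precisely the non-trivial content of the lemma. The danger is real: starting from $0$ the only admissible move is $\tau$, and the forward chain is the deterministic sequence $0,\,2\alpha,\,-2\alpha,\,4\alpha,\,-4\alpha,\dots$, whose underlying set $\{\pm 2k\alpha\}$ would exhaust $\F_p$ if the chain never terminated; so one must actually prove that the chain hits an element where the next move is forbidden \emph{before} covering everything (equivalently, before reaching $\alpha$, which sits at position $p-1$ of the unrestricted chain, whereas the first $j$ with $j\alpha=\pm1$ is at most $(p-1)/2$). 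Your parenthetical justification (``forced through a forbidden step at an element of $\{0,1,-1\}$'') is both vague and incomplete, since the chain may equally well terminate at an element of $\{\alpha+1,\alpha-1\}$. The paper avoids this computation by a cleaner trick: it picks two distinct elements $a,b\in\{1,-1,\alpha+1,\alpha-1\}\setminus\{1\}$ and shows $1$ cannot be $\calR$-related to both, because any minimal chain out of $1$ is forced to alternate $\tau,\sigma,\tau,\dots$ deterministically and so the two chains would coincide, giving $a=b$. Either route works, but as written your proof is missing its central step.
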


\begin{proof}
Let $\alpha \in \F_p$.
Since $\F_p$ is finite, it will suffice to prove that $\alpha.I_n$ is a $(1,-1,1,-1)$-composite for large enough $n$. \\
If $\alpha \in \{-1,0,1\}$, then the result is trivial.
Assume now this is not the case (and so $p \geq 5$).
We wish to prove that, for large enough $n$, there is a diagonal matrix
$D$ such that both $D-\alpha.I_n$ and $D$ are differences of idempotents.
Let then $D$ be an arbitrary diagonal matrix, and, for every $\lambda \in \K$,
set $n(\lambda):=\dim \Ker(D-\lambda.I_n)$. \\
Theorem \ref{HPdiff} then shows that for $D$ to satisfy the previous conditions, it is sufficient (and necessary) that:
\begin{enumerate}[(i)]
\item $n(-\lambda)=n(\lambda)$ for every $\lambda \in \F_p \setminus \{0,1,-1\}$;
\item $n(\lambda)=n(2\alpha-\lambda)$ for every $\lambda \in \F_p \setminus \{\alpha,\alpha+1,\alpha-1\}$.
\end{enumerate}
Our lemma will thus be proven if we show that,
for every large enough $n$, there is a family
$(a_k)_{k \in \F_p}$ of non-negative integers such that:
\begin{enumerate}[(i)]
\item $a_{-k}=a_k$ for every $k \in \F_p \setminus \{0,1,-1\}$;
\item $a_k=a_{2\alpha-k}$ for every $k \in \F_p \setminus \{\alpha,\alpha-1,\alpha+1\}$;
\item $\underset{k\in \F_p}{\sum}a_k=n$.
\end{enumerate}
Consider the two involutions $\sigma : k \mapsto -k$ and $\tau : k \mapsto 2\,\alpha-k$ of $\F_p$.
Let $\calR$ denote the equivalence relation on $\F_p$ generated by the two sets of elementary relations:
\begin{itemize}
\item $\forall k \in \F_p \setminus \{1,-1\}, \; \sigma(k) \sim k$;
\item $\forall k \in \F_p \setminus \{\alpha+1,\alpha-1\}, \; \tau(k) \sim k$.
\end{itemize}
We wish to show that $\calR$ is non-trivial relation, i.e. that it has at least two classes.
Clearly, we can pick two distinct elements $a$ and $b$ in the set $\{1,-1,\alpha+1,\alpha-1\} \setminus \{1\}$.
Assume $1\calR a$ and $1 \calR b$. \\
Then there are two minimal chains $1=a_0 \sim a_1 \sim a_2 \sim \dots \sim a_r=a$
and $1=b_0 \sim b_1 \sim b_2 \sim \dots \sim b_s=b$.
Since $\sigma$ and $\tau$ are involutions, an easy induction proves that
$a_{i+1}=\tau(a_i)$ and $b_{i+1}=\tau(b_i)$ for every even $i$, and
$a_{i+1}=\sigma(a_i)$ and $b_{i+1}=\sigma(b_i)$ for every odd $i$. It follows that
$a_i \not\in \{1,-1,\alpha+1,\alpha-1\}$ and $b_j \not\in \{1,-1,\alpha+1,\alpha-1\}$
for any $(i,j) \in \lcro 1,r-1\rcro \times \lcro 1,s-1\rcro$. Hence the two
previous chains are equal, which leads to the contradiction $a=b$.

\vskip 2mm
\noindent The previous \emph{reductio ad absurdum} proves that there are at least two classes
for the equivalence relation $\calR$. \\
Therefore, all the integers\footnote{Of course, $\# x$ denotes here the cardinal of the equivalence class $x$.}
$\# x$, for $x \in \F_p/\calR$,  belong to $\lcro 1,p-1\rcro$, and
since their sum is the prime $p$, they are globally mutually prime (i.e. their
greatest common divisor is $1$).
Since $\F_p$ has only finitely many partitions, Lemma \ref{scalarinto4} can be deduced from the classic
lemma of number theory that follows.
\end{proof}

\begin{lemme}
Let $a_1,\dots,a_r$ be positive integers that are globally mutually prime.
Then there exists a positive integer $N$ such that
$$\forall n \geq N, \; \exists (b_1,\dots,b_r)\in \N^r:  n=\underset{k=1}{\overset{r}{\sum}}\,b_r\,a_r.$$
\end{lemme}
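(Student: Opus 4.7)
The plan is to use $a_1$ as a reference modulus: I will show that the semigroup
$$S:=\Bigl\{\sum_{i=1}^r b_i\,a_i \;\Big|\; (b_1,\dots,b_r)\in \N^r\Bigr\}$$
meets every residue class modulo $a_1$, and then exploit the fact that $a_1 \in S$ to conclude that $S$ contains all sufficiently large integers.

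First, I would establish that for every $j \in \lcro 0,a_1-1\rcro$ there exists some element of $S$ congruent to $j$ modulo $a_1$. Since $\gcd(a_1,\dots,a_r)=1$, B\'ezout's identity yields integers $c_1,\dots,c_r \in \Z$ (possibly negative) with $\sum_i c_i\,a_i = 1$; multiplying by $j$ provides integers $c'_1,\dots,c'_r$ with $\sum_i c'_i\,a_i=j$. Replacing each $c'_i$ by $c'_i+k_i\,a_1$ for a sufficiently large positive integer $k_i$ makes all coefficients non-negative while only altering the total sum by a multiple of $a_1$. The resulting combination is an element of $S$ whose class modulo $a_1$ is $j$.

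For each $j \in \lcro 0,a_1-1\rcro$ I then let $m_j$ denote the smallest element of $S$ that is congruent to $j$ modulo $a_1$, and set
$$N:=\max_{0 \leq j \leq a_1-1} m_j.$$
Given any $n \geq N$, write $j:=n \bmod a_1$; then $n-m_j$ is a non-negative integer divisible by $a_1$, say $n-m_j=k\,a_1$ with $k \in \N$. Since $m_j \in S$, writing $m_j=\sum_i b_i\,a_i$ with $b_i \in \N$ yields
$$n=m_j+k\,a_1=(b_1+k)\,a_1+\sum_{i=2}^r b_i\,a_i,$$
which exhibits $n$ as a non-negative integer combination of $a_1,\dots,a_r$, as desired.

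The only genuinely non-trivial step is the first, namely the production of a non-negative representative in each residue class; everything else is bookkeeping. That step reduces cleanly to B\'ezout's identity followed by the shift trick $c'_i \mapsto c'_i+k_i\,a_1$, so I do not anticipate a serious obstacle.
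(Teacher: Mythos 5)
Your proof is correct and complete. The paper itself does not prove this lemma at all --- it invokes it as a ``classic lemma of number theory'' (it is the standard fact that a numerical semigroup generated by coprime integers contains all sufficiently large integers), so there is no argument in the paper to compare yours against. Your residue-class argument modulo $a_1$ (produce, via B\'ezout and the shift $c'_i \mapsto c'_i + k_i a_1$, an element of the semigroup in each class mod $a_1$, take the minimal such element $m_j$ in each class, and set $N = \max_j m_j$) is precisely the standard proof, and every step checks out; the only blemish is in the paper's own statement, where the summand should read $b_k a_k$ rather than $b_r a_r$.
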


\noindent We now move on to the second key lemma:

\begin{lemme}\label{tracefitsum}
Let $P_1,\dots,P_s$ denote non-constant monic polynomials of $\K[X]$.
Set $n_k:=\deg P_k$ for all $k$, then $N:=\underset{k=1}{\overset{s}{\sum}}n_k$ and
 $A:=D\bigl(C(P_1),\dots,C(P_s)) \in \Mat_N(\K)$. \\
Assume $\deg P_s \geq 2$. Then, for every integer $r \in \lcro s+1,N+1\rcro$
and for every monic polynomial $P$ of degree $N$ and trace $\tr(A)-r.1_\K$, there
are two idempotents $Q$ and $Q'$ of $\Mat_N(\K)$ such that $A-Q-Q' \sim C(P)$.
\end{lemme}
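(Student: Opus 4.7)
My plan is to adapt the two-idempotent construction from the proof of Theorem \ref{sumof5}, exploiting the hypothesis $\deg P_s \geq 2$ to gain enough freedom to both tune the total rank $\rg Q + \rg Q'$ to any value in $\lcro s+1, N+1 \rcro$ and to arrange the characteristic polynomial of $A - Q - Q'$ to be any prescribed $P$ of the right trace.

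Set $\ell' := N - n_s$ and consider the splitting $\K^N = \K^{\ell'} \oplus \K^{n_s}$. First, I build two idempotents $Q$ and $Q'$ following the template of Theorem \ref{sumof5} --- the alternating $F/{-H}$ pattern which produces a $-1$ at every block-boundary subdiagonal position --- but with each rank-one $F_{n_k}$ block replaced by a general idempotent of $\Mat_{n_k}(\K)$ with standard last row of arbitrary rank in $\lcro 1, n_k \rcro$. With this change, $A - Q - Q'$ is a good cyclic matrix of the form $\begin{bmatrix} M_1 & D_0 \\ H_{n_s, \ell'} & M_2 \end{bmatrix}$ with $M_1 \in \Mat_{\ell'}(\K)$ and $M_2 \in \Mat_{n_s}(\K)$ good cyclic, and the total rank $\rg Q + \rg Q'$ takes any desired value in $\lcro s, N \rcro$. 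The extreme case $r = N+1$ is reached by inserting one extra rank-one idempotent (with zero first column) into the $(s,s)$ position of whichever of $Q, Q'$ had a zero there in the basic template, which is possible precisely because $n_s \geq 2$.

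At this stage Lemma \ref{cyclicfit} supplies a matrix $D \in \Mat_{\ell', n_s}(\K)$ with $\begin{bmatrix} M_1 & D \\ H_{n_s, \ell'} & M_2 \end{bmatrix} \sim C(P)$, the traces being compatible by construction. The remaining task is to realize this specific $D$ in the top-right of $A - Q - Q'$ without altering $M_1$, $M_2$, the connector $H_{n_s,\ell'}$, or the total rank. The idea is to add above-block-diagonal entries to $Q$ (or $Q'$) whose net effect is the shift $D_0 - D$. Once the $F$-block at position $(s, s)$ has been taken to be the identity $I_{n_s}$ --- again possible thanks to $n_s \geq 2$ --- the idempotency equations for these new entries essentially decouple on the columns corresponding to block $s$, so that arbitrary entries can be introduced, up to mild last-row or first-column conditions which can be absorbed by distributing the corrections between $Q$ and $Q'$; this supplies the $\ell' \times n_s$ worth of freedom required.

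The principal obstacle is the joint idempotency check: the upper-right corrections, the subdiagonal connectors, the standard-last-row constraints on the $F$-blocks, and the rank tuning must all be simultaneously compatible. The edge cases where some intermediate $n_k = 1$ would cause a collision between the ``standard last row'' and ``zero first column'' conditions inside a single idempotent; this is resolved exactly as in Theorem \ref{sumof5}, by letting the two idempotents share the connector load alternately rather than insisting that any single one of them handle all the block boundaries at once.
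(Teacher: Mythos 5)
Your overall strategy (alternating block idempotents, rank tuning via the diagonal blocks, then Lemma \ref{cyclicfit}) is the paper's strategy, but the splitting you choose for Lemma \ref{cyclicfit} introduces a genuine gap. Applying the lemma with the splitting $N=\ell'+n_s$ forces you to realize a \emph{prescribed} block $D\in\Mat_{\ell',n_s}(\K)$ in the upper-right corner of $Q+Q'$, and Lemma \ref{cyclicfit} gives you no control over which $D$ it produces; so you genuinely need the full $\ell'\times n_s$ space of corrections. You do not have it. For an idempotent of the form $\begin{bmatrix} B & E \\ 0 & R\end{bmatrix}$ the condition $BE+ER=E$ is equivalent to $E(\im R)\subseteq \Ker B$ and $E(\Ker R)\subseteq \im B$, a subspace of dimension $(\ell'-\rg B)\rg R+\rg B\,(n_s-\rg R)$, which equals $\ell'n_s$ only in the extreme cases $(B,R)=(0,I)$ or $(I,0)$. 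Summing the contributions of $Q$ and $Q'$ does not restore full freedom: in the alternating construction each of the top-left parts $B$, $B'$ is a nonzero idempotent, and one checks (already for $s=2$, or for $s\geq 3$ where the connectors create further coupled equations $E_{k,s}=-H\,E_{k-1,s}$) that $\Ker B+\im B'\neq\K^{\ell'}$, so some rows of the correction stay constrained. Worse, the one configuration that does give full column freedom --- taking the $(s,s)$ block of one idempotent equal to $I_{n_s}$ --- forces $\rg Q+\rg Q'\geq n_s+{}$(number of remaining blocks), which misses the low end of the required range $\lcro s+1,N+1\rcro$ as soon as $n_s>2$; your rank-tuning claim and your freedom claim are therefore in direct conflict, and the phrase ``the idempotency equations essentially decouple'' is where the proof breaks.

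The paper avoids all of this by splitting at $N=(N-1)+1$, so that only the last \emph{column} needs to be free. For $Q=\begin{bmatrix} B & C \\ 0 & 1\end{bmatrix}$ the idempotency condition on the column is simply $BC=0$, which constrains only the sub-columns $C_k$ sitting in the blocks where $B$ is nonzero; imposing $C_k=0$ for odd $k$ in $Q$ and $C'_k=0$ for even $k$ in $Q'$ makes $C+C'$ completely arbitrary, while the trace $r$ is tuned independently through diagonal $\{0,1\}$-matrices $D_k$ added to the blocks $F_{n_k}$ (this is also where $\deg P_s\geq 2$ is used, to keep a good cyclic $(N-1)\times(N-1)$ corner and the subdiagonal $1$ in position $(N,N-1)$). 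You should rework your last two steps along these lines rather than trying to force an $\ell'\times n_s$ block of freedom.
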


\begin{proof}
For every $k \in \lcro 1,s-1\rcro$, we choose arbitrarily two column matrices
$C_k$ and $C'_k$ in $\Mat_{n_k,1}(\K)$ and a diagonal matrix $D_k=D(1,\dots,1,0,\dots,0) \in \Mat_{n_k}(\K)$ with last coefficient $0$.
We also choose arbitrarily two column matrices $C_s$ and $C'_s$ in $\Mat_{n_s-1,1}(\K)$ and a diagonal matrix $D_s \in \Mat_{n_s-1}(\K)$
with coefficients in $\{0,1\}$.
We set
$$B=\begin{bmatrix}
D_1+F_{n_1} & 0 & & & & \\
-H_{n_2,n_1} & 0 & 0 & & & \\
0 & 0 & D_3+F_{n_3} & \ddots  \\
\vdots & & & \ddots & &
\end{bmatrix}
\quad ; \quad
B'=
\begin{bmatrix}
0 & 0 & 0 & & & \\
0 & D_2+F_{n_2} & 0 & & & \\
0 & -H_{n_3,n_2} & 0 & 0  \\
0 & 0 & 0 & D_4+F_{n_4}  \\
0 & 0 & 0 & -H_{n_5,n_4} & \ddots  \\
\vdots & & & & \ddots & &
\end{bmatrix}$$
$$C=\begin{bmatrix}
C_1 \\
C_2 \\
\vdots \\
C_s
\end{bmatrix} \quad ; \quad
C'=\begin{bmatrix}
C'_1 \\
C'_2 \\
\vdots \\
C'_s
\end{bmatrix}.$$
Finally, we set
$$Q=\begin{bmatrix}
B & C \\
0 & 1
\end{bmatrix} \quad \text{and} \quad Q'=\begin{bmatrix}
B & C' \\
0 & 1
\end{bmatrix}.$$
Straightforward computation shows that the matrices $Q$ and $Q'$ are both idempotents provided the following conditions
hold:
\begin{enumerate}[(i)]
\item $C_k=0$ for every odd integer $k$;
\item $C'_k=0$ for every even integer $k$.
\end{enumerate}
We now choose an arbitrary column matrix $C_0 \in \Mat_{N-1,1}(\K)$ : the
 $C_k$'s and $C'_k$'s can be chosen so as to satisfy the previous conditions
together with $C_0=C+C'$, and we choose them accordingly.
Hence $Q$ and $Q'$ are idempotents, and
$$A-Q-Q'=\begin{bmatrix}
B_1 & C_0 \\
H_{1,N-1} & ?
\end{bmatrix}$$
for some good cyclic matrix $B_1$ which depends only on the choice of $D_1,\dots,D_s$.
Hence Lemma \ref{cyclicfit} shows that for every monic polynomial $P$ of degree $N$ and trace
$\tr(A-Q-Q')$, we can choose $C_0$ such that
$$A-Q-Q' \sim C(P).$$
To conclude, we simply remark that
$$\tr(A-Q-Q')=\tr(A)-(s+1)-\underset{k=1}{\overset{s}{\sum}}\tr D_k$$
and that any element of $\bigl\{k.1_\K  \mid 0 \leq k \leq N-s\bigr\}$ can be reached by
$\underset{k=1}{\overset{s}{\sum}}\tr D_k$ if the $D_k$'s are carefully chosen.
\end{proof}

\begin{cor}\label{tracefitdiff}
With the assumptions from Lemma \ref{tracefitsum},
for every integer $r \in \lcro s+1-N,1\rcro$
and every monic polynomial $P$ of degree $N$ and trace $\tr(A)-r.1_\K$, there
are idempotents $Q$ and $Q'$ of $\Mat_N(\K)$ such that $A-(Q-Q') \sim C(P)$.
\end{cor}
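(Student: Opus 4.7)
The plan is to deduce the corollary from Lemma \ref{tracefitsum} by means of the involution $R \mapsto I_N - R$ on the set of idempotents of $\Mat_N(\K)$: this bijection converts a sum of two idempotents into a difference, up to a translation by $I_N$, and translation by a scalar matrix preserves cyclicity while shifting the characteristic polynomial by $X \mapsto X - 1$.

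Concretely, given $r \in \lcro s+1-N, 1 \rcro$ and a monic polynomial $P$ of degree $N$ with $\tr P = \tr(A) - r.1_\K$, I would first introduce
$$\widetilde{P}(X) := P(X+1).$$
A short expansion of $(X+1)^N$ yields $\tr \widetilde{P} = \tr P - N.1_\K = \tr(A) - (r+N).1_\K$, and by assumption $r+N$ lies in $\lcro s+1, N+1 \rcro$. Lemma \ref{tracefitsum} therefore applies to $\widetilde{P}$ and produces idempotents $Q$ and $Q_1$ of $\Mat_N(\K)$ such that $A - Q - Q_1 \sim C(\widetilde{P})$. I would then set $Q' := I_N - Q_1$, which is again an idempotent, and observe that
$$A - (Q - Q') \;=\; (A - Q - Q_1) + I_N \;\sim\; C(\widetilde{P}) + I_N.$$

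To conclude, I would invoke the classical fact that adding $I_N$ to a cyclic matrix preserves cyclicity while shifting the characteristic polynomial from $\chi$ to $\chi(X-1)$; applied to $C(\widetilde{P})$, this gives $C(\widetilde{P}) + I_N \sim C\bigl(\widetilde{P}(X-1)\bigr) = C(P)$, and therefore $A - (Q - Q') \sim C(P)$, as required. I do not anticipate any real obstacle: the argument is a mechanical translation from Lemma \ref{tracefitsum} via the involution above, the only extra ingredients being the trivial facts that $R \mapsto I_N - R$ preserves idempotence and that shifting a cyclic matrix by $I_N$ translates its characteristic polynomial while preserving cyclicity.
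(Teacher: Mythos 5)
Your proposal is correct and is essentially the paper's own argument: the paper simply applies Lemma \ref{tracefitsum} to $A'=A+I_N$, which is the same translation-by-$I_N$ trick combined with the involution $R\mapsto I_N-R$ on idempotents; you merely perform the shift on the polynomial ($P\mapsto P(X+1)$) rather than on the matrix. All the details you fill in (the trace shift by $N.1_\K$, the range $r+N\in\lcro s+1,N+1\rcro$, and $C(\widetilde{P})+I_N\sim C(P)$) are accurate.
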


\begin{proof}
It suffices to apply Lemma \ref{tracefitsum} to the matrix $A'=A+I_N$.
\end{proof}

\noindent Finally, we will need the following lemma:

\begin{lemme}[Embedding lemma]\label{nilfit4}
Let $r \geq 2$ and $P$ be a monic polynomial of degree $r$.
Then there is an integer $m_r$, depending only on $r$, such that the matrix
$\begin{bmatrix}
C(P) & 0 \\
0 & 0_{m_r}
\end{bmatrix} \in \Mat_{r+m_r}(\K)$ is a $(1,-1,1,-1)$-composite.
\end{lemme}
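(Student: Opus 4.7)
The plan is to exhibit four idempotents $Q_1,Q_2,Q_3,Q_4 \in \Mat_{r+m_r}(\K)$ with $A_P = Q_1 - Q_2 + Q_3 - Q_4$, where $A_P := \begin{bmatrix} C(P) & 0 \\ 0 & 0_{m_r} \end{bmatrix}$. The main strategy is to apply Corollary~\ref{tracefitdiff} to obtain a decomposition $A_P - (Q_1 - Q_2) \sim C(\tilde P)$ for a carefully chosen monic polynomial $\tilde P$ of degree $N := r + m_r$, and then to ensure that $C(\tilde P)$ itself is a $(1,-1)$-composite $Q_3 - Q_4$, chaining the two differences into the desired $(1,-1,1,-1)$-decomposition.

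To apply the corollary I first put $A_P$ in a rational canonical form of the shape $D(C(P_1),\dots,C(P_s))$: when $X \nmid P$ one has $A_P \sim D(C(X),\dots,C(X),C(XP))$ with $m_r - 1$ trivial blocks, and when $X \mid P$ one has $A_P \sim D(C(X),\dots,C(X),C(P))$ with $m_r$ trivial blocks. In either case the last block has degree $\geq 2$ (using $r \geq 2$), so Corollary~\ref{tracefitdiff} yields idempotents $Q_1,Q_2$ and a polynomial $\tilde P$ of any trace $\tr P - r_1\cdot 1_\K$ with $r_1$ in an explicit integer interval of length $r$ or $r+1$. To make $C(\tilde P)$ itself a $(1,-1)$-composite, I use Theorem~\ref{HPdiff}: since $C(\tilde P)$ is cyclic, it has at most one Jordan block per eigenvalue, and a short case analysis shows that the intertwining condition at $\pm 1$ rules out any factor $(X-1)$ or $(X+1)$, while the symmetry condition forces the remaining non-zero roots over $\overline\K$ to pair off as $\lambda \leftrightarrow -\lambda$ with equal multiplicity. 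All such $\tilde P$ have trace zero, and the simplest choice is $\tilde P = X^N$.

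The main obstacle is that the trace condition then reads $r_1 \equiv \tr P \pmod p$, while the interval of admissible $r_1$ has length only $r$ or $r+1$ and does not depend on $m_r$. For $r \geq p$ this interval already covers every residue of $\F_p$, so the argument succeeds for any $P$ even with $m_r = 0$. For $r < p$ one has to pick $m_r$ on the order of $p$ and combine two applications of the lemmas: a preliminary use of Lemma~\ref{tracefitsum} exploits the explicit freedom in the diagonal matrices $D_k$ built in its proof—once $m_r$ is large enough these parameters take values in a set whose images in $\F_p$ cover all of $\F_p$—to absorb the bulk of $\tr P$ into the sum of two idempotents, and a subsequent application of Corollary~\ref{tracefitdiff} to the resulting cyclic matrix of degree $N \geq p$ then pins down the trace of $\tilde P$ at $0$ exactly. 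The delicate point, and the part I expect to require the most work, is to verify that the four idempotents produced across these two stages can be chosen compatibly (by nesting the block supports of the $Q_i$'s constructed in the proof of Lemma~\ref{tracefitsum}) so that the total count stays at four rather than six.
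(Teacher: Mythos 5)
You have correctly identified the crux — for $2 \leq r < p$ the window of traces reachable via Corollary \ref{tracefitdiff} is too short to force the residual companion matrix to be a $(1,-1)$-composite (whose trace, by Theorem \ref{HPdiff}, is confined to $\{-1.1_\K,0,1.1_\K\}$ in odd characteristic) — but your proposed escape fails. The only trace freedom in the proof of Lemma \ref{tracefitsum} is $\sum_k \tr D_k \in \lcro 0, N-s\rcro$, and every $1\times 1$ zero block you append increases $s$ by one at the same time as $N$: the block $0_{m_r}$ is not cyclic, so it contributes $m_r$ companion blocks $C(X)$, each of which carries an empty $D_k$. Hence $N-s$ stays equal to $r-1$ (or $r$) no matter how large $m_r$ is; the ``explicit freedom in the $D_k$'s'' is exactly the freedom already recorded in the statement of the lemma, and its image in $\F_p$ never covers all residues when $r<p$. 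Moreover, even granting the trace could be absorbed, chaining Lemma \ref{tracefitsum} with Corollary \ref{tracefitdiff} and then decomposing the final companion matrix yields signs $(1,1,1,-1,1,-1)$, i.e.\ six idempotents with the wrong sign pattern; the ``delicate point'' you defer — collapsing this to a $(1,-1,1,-1)$-composite — is the actual content of the lemma and is left unaddressed. So the proposal has a genuine gap precisely in the only nontrivial range $2 \leq r < p$.

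The paper's proof avoids forcing the residual companion matrix to have small trace. It appends only $k \in \lcro 0,p-1\rcro$ zero rows, with $k$ chosen so that $\tr P = (r+k+1).1_\K$, and uses Corollary \ref{tracefitdiff} with the fixed admissible parameter $1$ to land on $C((X-1)^{r+k})$: the single Jordan block at eigenvalue $1$ soaks up the trace, since $\tr C((X-1)^{r+k}) = (r+k).1_\K$ varies with $k$ over all residues. Of course $C((X-1)^{r+k})$ alone is not a difference of idempotents, but the staircase $B = D\bigl(C((X+1)^{r+k}),C((X-1)^{r+k-1}),\dots,C(X+1)\bigr)$ is one, and $D\bigl(C((X-1)^{r+k}),B\bigr)$ satisfies the intertwining criterion of Theorem \ref{HPdiff} (resp.\ the parity criterion of Theorem \ref{HPsumcar2}), hence is again a difference of two idempotents. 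Enlarging $0_{m_r}$ by the size of $B$ and absorbing $B$'s own decomposition block-diagonally into $Q_1-Q_2$ keeps the count at four. Some such complementation device — padding the target Jordan block by a spectrum that makes the union a $(1,-1)$-composite, rather than demanding the companion matrix be one on its own — is the idea your argument is missing.
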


\begin{proof}
To start with, let us remark that if $m_r$ is a solution, any integer greater that $m_r$ is also a solution.
We first choose an integer $k \in \lcro 0,p-1\rcro$ such that
$\tr(P)-(r+k+1).1_\K=0$. Corollary \ref{tracefitdiff} then provides idempotents
$Q_1$ and $Q_2$ in $\Mat_{r+k}(\K)$ such that
$$\begin{bmatrix}
C(P) & 0 \\
0 & 0_k
\end{bmatrix}-(Q_1-Q_2) \sim C((X-1)^{r+k}).$$
Consider then the block-diagonal matrix
$$B:=D\bigl((X+1)^{r+k},(X-1)^{r+k-1},(X+1)^{r+k-1},(X-1)^{r+k-2},\dots,(X-1),(X+1)\bigr).$$
Theorems \ref{HPdiff} and \ref{HPsumcar2} ensure that $B$ is a difference of two idempotents $Q'_1$ and $Q'_2$.
Letting $N$ denote the size of $B$, we obtain
$$\begin{bmatrix}
C(P) & 0 \\
0 & 0_{N+k}
\end{bmatrix}
-\begin{bmatrix}
Q_1 & 0 \\
0 & Q'_2
\end{bmatrix}+\begin{bmatrix}
Q_2 & 0 \\
0 & Q'_1
\end{bmatrix} \sim D\bigl(C((X-1)^{r+k}),B\bigr).$$
Another use of Theorems \ref{HPdiff} and \ref{HPsumcar2} proves then that
this last matrix is itself a $(1,-1)$-composite.
Roughly $N+k\leq p+(p+r)+2\frac{(p+r-1)\,(p+r)}{2}$,
so the integer $m_r=2\,p+r+(p+r)^2$ is a solution.
\end{proof}

\vskip 3mm
\noindent We are now ready to prove Theorem \ref{asymptoticsum4}.

\noindent Let $A \in \Mat_n(\K)$. By reduction to the rational canonical form, we find an $\alpha \in \K$, an integer $q \geq 0$ and
monic polynomials $P_1,\dots,P_s$ of degree greater or equal to $2$ such that
$$A \sim C\bigl(\alpha.I_q,C(P_1),\dots,C(P_s)\bigr).$$
Set $N:=n-q=\underset{k=1}{\overset{s}{\sum}}\,\deg P_k$.

\noindent We wish to prove that, provided $n$ is large enough, $A$ is automatically a $(1,-1,1,-1)$-composite.
Lemma \ref{scalarinto4} already provides an integer $n_0$ such that
$\beta.I_m$ is a $(1,-1,1,-1)$-composite for every $\beta \in \F_p$ and every integer $m \geq n_0$.

\begin{itemize}
\item Assume first $N\geq 2p$. Then $N-s \geq \frac{N}{2}\geq p$.
Hence $\tr (A) \equiv r$ mod. $p$ for some $r \in \lcro s-N+1,1\rcro$,
and Corollary \ref{tracefitdiff} provides idempotents $Q_1$ and $Q_2$ such that
$A-(Q_1-Q_2) \sim C(X^n)$, so $A-(Q_1-Q_2)$ is nilpotent and itself a difference of idempotents.
\vskip 2mm
\item Assume $N<2\,p$, $q \geq p+n_0$ and $\alpha \neq 0$.
We write
$$A \sim \begin{bmatrix}
\alpha.I_q & 0 \\
0 & A_1
\end{bmatrix} \quad \text{with} \quad A_1 \sim D\bigl(C(P_1),\dots,C(P_s)\bigr).$$
Since $\alpha \neq 0$, we have $\tr A_1-1+t.\alpha=0$ for some $t \in \lcro 0,p\rcro$.
Decompose then
$$A \sim \begin{bmatrix}
\alpha.I_{q-t} & 0 \\
0 & A_2
\end{bmatrix} \quad \text{with} \quad A_2=\begin{bmatrix}
\alpha.I_t & 0 \\
0 & A_1
\end{bmatrix}.$$
Corollary \ref{tracefitdiff} provides idempotents $Q$ and $Q'$ such that $A_2-(Q-Q') \sim C(X^{N-t})$
so $A_2$ is a $(1,-1,1,-1)$-composite. \\
Since $q-t \geq n_0$, we learn that $\alpha.I_{q-t}$ is also a $(1,-1,1,-1)$-composite. \\
It then follows that $A$ is itself a $(1,-1,1,-1)$-composite.
\item Assume finally that $N<2\,p$ and $\alpha=0$.
Choose, for every integer $r \geq 2$, an integer $m_r$ provided by Lemma \ref{nilfit4}.
Assume $q \geq p\,\underset{2 \leq r<2\,p}{\max}\,m_r$.
We can then decompose
$$A \sim D(A_1,\dots,A_s,0,\dots,0)$$
with, for every $k \in \lcro 1,s\rcro$,
$$A_k \sim \begin{bmatrix}
C(P_k) & 0 \\
0 & 0_{m_k}
\end{bmatrix}.$$
By Lemma \ref{nilfit4}, every $A_k$ is a $(1,-1,1,-1)$-composite, so $A$ also is.
\end{itemize}

\noindent Finally, provided $n$ is large enough\footnote{Say $n>2\,p+\max\biggl[p+n_0,\,p\,\underset{2 \leq r<2\,p}{\max}\,m_r\biggr]$.},
then $A$ automatically falls into one of the three categories we have just inspected. This finishes our proof of Theorem \ref{asymptoticsum4}.

\begin{Rem}
Whether this upper bound of $4$ idempotents still holds for an arbitrary non-prime field of positive characteristic
remains an open problem so far.
\end{Rem}

\section*{Acknowledgements}
I would like to express deep gratitude towards V. Rabanovich for submitting me some of the questions tackled here.

\end{document}